\documentclass[article]{amsart}
\usepackage{dsfont}
\usepackage{color}
\usepackage{cases}
\usepackage{amsmath, amsfonts, pifont, amssymb}
\usepackage{enumerate}
\usepackage{verbatim}
\usepackage[bookmarks=true]{hyperref}
\usepackage{geometry}
 \geometry{
 a4paper,
 left=40mm,right=40mm,
 top=40mm,
 }
\newtheorem{theorem}{Theorem}[section]
\newtheorem{lemma}[theorem]{Lemma}
\newtheorem{proposition}[theorem]{Proposition}

\newcommand{\R}{\mathbb{R}}

\newcommand{\beq}{\begin{equation}}
\newcommand{\eeq}{\end{equation}}
\newcommand{\beqq}{\begin{equation*}}
\newcommand{\eeqq}{\end{equation*}}

\newcommand{\lf}{\left}
\newcommand{\ri}{\right}
\theoremstyle{definition}
\newtheorem{definition}[theorem]{Definition}

\theoremstyle{remark}
\newtheorem{remark}[theorem]{Remark}

\numberwithin{equation}{section}

\def\ep{\epsilon}



\usepackage{mathrsfs}
\newcommand{\T}{\mathbb{T}}
\newcommand{\Z}{\mathbb{Z}}
\newcommand{\N}{\mathbb{N}}
\newcommand{\C}{\mathbb{C}}

\newcommand{\ii }{{\rm i} }

\newcommand{\ps}[2]{\left( #1 ,#2 \right)}
\newcommand{\psc}[2]{\left\langle #1,#2 \right\rangle}
\newcommand{\p}{\partial}

\numberwithin{equation}{section}
\newcommand\bna{\begin{eqnarray*}}
\newcommand\ena{\end{eqnarray*}}
\newcommand\bnan{\begin{eqnarray}}
\newcommand\enan{\end{eqnarray}}

\begin{document}

\address{Jingrui Niu
\newline \indent Sorbonne Université, CNRS, Université Paris Cité, Inria Team CAGE, Laboratoire Jacques-Louis Lions (LJLL), F-75005 Paris, France
}
\email{jingrui.niu@sorbonne-universite.fr}

\address{Zehua Zhao
\newline \indent Department of Mathematics and Statistics, Beijing Institute of Technology, Beijing, China.
\newline \indent Key Laboratory of Algebraic Lie Theory and Analysis of Ministry of Education, Beijing, China.}
\email{zzh@bit.edu.cn}

\title[Control for semi-periodic Schr\"odinger equations]{Observability and controllability for Schr\"odinger equations in the semi-periodic setting}
\author{Jingrui Niu and Zehua Zhao}

\begin{abstract}
Strichartz estimates, well-posedness theory and long time behavior for (nonlinear) Schr\"odinger equations on waveguide manifolds $\mathbb{R}^m \times \mathbb{T}^n$ are intensively studied in recent decades while the corresponding control theory and observability estimates remain incomplete. The purpose of this short paper is to investigate the observability and controllability for Schr\"odinger equations in the waveguide (semi-periodic) setting. 

Our main result establishes local exact controllability for the cubic nonlinear Schr\"odinger equations (NLS) on $\mathbb{R}^2 \times \mathbb{T}$, under certain geometric conditions on the control region. To address the nonlinear control problem, we begin by analyzing the observability properties of the linear Schr\"odinger operator on a general waveguide manifold $\mathbb{R}^m \times \mathbb{T}^n$. Utilizing $H^s$ estimates of the Hilbert Uniqueness Method (HUM) operator and Bourgain spaces, we then prove local exact controllability through a fixed-point method.
\end{abstract}

\keywords{nonlinear Schr\"odinger equations, waveguide manifolds, well-posedness, controllability, observability.}
\subjclass[2020]{Primary: 93B05; Secondary: 35R01, 93C20, 35Q55}

\maketitle

\setcounter{tocdepth}{1}
\tableofcontents



\section{Introduction}
In this paper, we study the internal exact controllability for the semi-periodic nonlinear Schr\"odinger equation on the waveguide manifold $\R^m\times\T^{n}$ (with dimension $d=m+n$): 
\begin{equation}\label{eq: NLS-intro}
\left\{
\begin{array}{ll}
\ii\p_t u+\Delta_{\R^m\times\T^n} u+\ep|u|^{p-1}u=f &\text{ on } [0,+\infty)\times\R^m\times\T^n  \\
 u|_{t=0}=u_0    & \text{ on }\R^m\times\T^n
\end{array}
\right.
\end{equation}
where $m$ and $n$ are two positive integers, and $\ep\in\R$. $\Delta_{\R^m\times\T^n}$ is the canonical Laplacian defined in $\R^m\times\T^n$, and sometimes we may write $\Delta_{x,y}$ for short. 
\subsection{Background and motivations}
In the literature, there has been extensive research on the controllability and observability of (nonlinear) Schr\"odinger equations on compact manifolds and bounded domains. We begin with the linear setting, particularly addressing observability problems. When the control region is an open set satisfying the geometric control condition (GCC), Lebeau \cite{Leb-Sch} proved that observability is true for an arbitrarily short time \(T>0\). However, GCC is not always a necessary condition. In various contexts, including the torus \cite{AM-JEMS,BBZ,BZ-MRL,Jaffard90}, compact hyperbolic surfaces \cite{Jin} (also see \cite{AR} for negatively curved manifolds), and the disk \cite{ALM}, observability has been established for any \(T>0\) and for any non-empty open control region, provided the control region encompasses a neighborhood of some portion of the boundary.  It is also worth mentioning that for certain subelliptic Schr\"odinger equations, the minimal observability time can be strictly positive (see \cite{Burq-Sun}).

Regarding the controllability of nonlinear Schr\"odinger equations (NLS), significant results have been obtained for different dimensions on compact manifolds or bounded domains (though the list here is not exhaustive). In one dimension, Laurent \cite{laurent2010global} proved global internal controllability in large time for NLS on a bounded interval, while local results were also achieved in \cite{RZ-1}. For two dimensions, Dehman, G\'erard, and Lebeau \cite{DGL} established exact controllability in $H^1$ for the defocusing NLS on compact surfaces. In the three-dimensional case, Laurent \cite{Laurent-dim3} demonstrated global internal controllability over large time intervals for NLS on certain compact manifolds. Concerning the boundary controls, we refer to \cite{Gagnon,RZ-09} and the references therein.  For a broader overview of these results, we refer to the surveys \cite{zuazua-survey,Laurent-survey}.

More recently, attention has shifted towards understanding observability for Schr\"odinger equations in non-compact settings \cite{HWW-CMP,WWZ-JEMS,prouff2023}. The specific case of periodic domains has been explored for the observability of the free Schr\"odinger equation \cite{taufer} and for settings with a periodic potential \cite{BM2023}. Notably, in \cite{BM2023}, the control region, although \(2\pi \mathbb{Z}^2\)-periodic, might not satisfy  GCC. Despite these advances, the study of controllability and observability in semi-periodic settings remains largely unexplored.

We now provide a concise overview of our research focus: ``\textbf{(nonlinear) Schr\"odinger equations on waveguides}". This area has garnered significant attention over the past few decades, emerging as a prominent subject within nonlinear dispersive equations. Our approach blends traditional dispersive techniques with innovative analysis tools to delve into this intriguing domain.


Waveguide manifolds, denoted as $\R^m \times \T^n$, represent the product of Euclidean space with tori and are of particular relevance in nonlinear optics, especially within telecommunications. Currently, data signals in backbone networks predominantly travel via optical carriers through fibers, which function as specialized waveguides. As applications like the internet demand greater bandwidth and cost-effective data transmission, there is increasing emphasis on optimizing these network infrastructures. The nonlinear Schr\"odinger equation plays a pivotal role in modeling nonlinear effects in optical fibers, essential for enhancing performance and efficiency. In physics, an optical waveguide directs light along a defined path, and the study of solutions on waveguide manifolds is particularly intriguing due to the unique combination of properties inherited from both Euclidean spaces and tori, offering deeper insights into the underlying physics.

Given the nature of these combined spaces, the nonlinear Schr\"odinger Equation (NLS) posed on the waveguide manifold inherits characteristics from both Euclidean spaces and tori. The Euclidean case is studied and the theory, at least in the defocusing setting, is well established. (See \cite{CW,I-team1,Dodson3,Taobook} and the references therein.) Moreover, we refer to \cite{HTT1, IPT3, KV16, Yue} for a few works on tori. Due to the nature of such product spaces, we see NLS posed on the waveguide manifold mixed inheriting properties from those on classical Euclidean spaces and tori. The techniques used in Euclidean and tori settings are frequently combined and applied to waveguide problems. We refer to \cite{R2T,CGZ,CZZ,HP,HTT1,TV1,TV2,HTT2,IPT3,IPRT3,yang2023scattering,Z1,Z2,ZZ}  for some NLS results in the waveguide setting. At last, we note that, though scattering behavior is not expected for the periodic case because of the lack of dispersive, for some specific models of waveguides, modified scattering, and scattering results can be obtained as in the Euclidean. (See \cite{R2T,HP,TV2,MR3406826} for example.) 

\subsection*{Geometric setting}
We work in a $d-$dimensional waveguide manifold as $\R^m\times\T^n$, where $m$ and $n$ are two positive integers with (whole dimension) $d=m+n$ ($m,n \geq 1$). We consider a control region $\Omega$ of the following type~:

\smallskip {\bf (G)} $\Omega=(\Omega_1,\Omega_2)\subset\R^m\times\T^n$\footnote{This means $\Omega=\{(x,y)| x\in \Omega_1, y \in \Omega_2\}$.}. Let $\Omega_1\subset\R^m$ be a nonempty, open, $2\pi \mathbb{Z}^m$-invariant set. Let $\Omega_2 \subset \mathbb{T}^{d-m}$ be open and nonempty.

This kind of region may not satisfy the Geometric Control Condition, which is raised in e.g. \cite{Leb-Sch}

\subsection{The statement of the main results}
In this subsection, we present the main results of this paper respectively. We note that all of the results can be generalized to all dimensions in a natural way. Since 3D cubic NLS model is typical and popular\footnote{We refer to Kenig-Merle \cite{kenig2010scattering} for the scattering result in the Euclidean setting. See also the references therein.}, in Theorem \ref{thm1} and Theorem \ref{thm4}, we adopt 3D cubic NLS model\footnote{We consider cubic NLS on $\R^2\times\T$ or $\R\times\T^2$. Since the proofs work in the same way, we only consider the $\R^2\times\T$ case.} to illustrate the nonlinear results. 
\begin{theorem}[Exact controllability]\label{thm1}
Let $T>0$, and $\ep=\pm1$. Let $\Omega=(\Omega_1,\Omega_2)$ satisfy the condition (\textbf{G}). For any $s\geq 1$, there exists $\delta>0$, such that for all $u_0,u_{f}\in H^s(\R^2\times\T)$ satisfying that $\|u_0\|_{H^s(\R^2\times\T)}+\|u_f\|_{H^s(\R^2\times\T)}<\delta$, there exists a control function $f\in C([0,T];H^s(\R^2\times\T))$ supported in $[0,T]\times \Omega$ such that the unique solution $u\in X^{s,b}_T$ to
\begin{equation}
\left\{
\begin{array}{ll}
\ii\p_t u+\Delta_{x,y} u+\ep|u|^{2}u=f &\text{ on }[0,T]\times\R^2\times\T,  \\
 u|_{t=0}=u_0    & \text{ on }\R^2\times\T,
\end{array}
\right.
\end{equation}
fulfils $u|_{t=T}=u_f$.
\end{theorem}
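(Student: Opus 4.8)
The plan is to reduce the nonlinear exact controllability to the linear controllability of the free Schr\"odinger equation plus a perturbative fixed-point argument in Bourgain spaces, following the scheme of Dehman--G\'erard--Lebeau and Laurent. Writing $S(t)=e^{\ii t\Delta_{x,y}}$ for the free propagator, the first step is to establish exact controllability for the linear equation $\ii\p_t v+\Delta_{x,y}v=\mathbf{1}_\Omega g$ at the $H^s$ level. By the Hilbert Uniqueness Method this is dual to the observability estimate
\begin{equation*}
\norm{v_0}_{H^s(\R^2\times\T)}^2\less\int_0^T\norm{\chi\,S(t)v_0}_{H^s(\R^2\times\T)}^2\,dt,
\end{equation*}
where $\chi$ is a smooth cutoff supported in $\Omega$ and equal to one on a slightly smaller region; this observability for the linear operator on the general waveguide $\R^m\times\T^n$ is precisely the linear input announced in the abstract, so I would invoke it here. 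Since HUM naturally produces an $L^2$-control, I would upgrade it to $H^s$ by conjugating with $\langle D\rangle^s$ and controlling the commutator $[\langle D\rangle^s,\chi]$ via a propagation-of-regularity argument, thereby obtaining a bounded linear control operator $\mathcal{K}$ that maps a pair of $H^s$ states (initial datum and target) to a control $g$ steering one to the other, with $\mathcal{K}$ also bounded into the relevant $X^{s,b}_T$ norm.

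Second, I would set up the fixed point. Given $u$ in a small ball of $X^{s,b}_T$, let $N(u)=\ep|u|^2u$ denote the cubic nonlinearity, and define $\Phi(u)$ to be the solution of
\begin{equation*}
\ii\p_t w+\Delta_{x,y}w=\mathbf{1}_\Omega\,g_u-N(u),\qquad w|_{t=0}=u_0,
\end{equation*}
where the control $g_u$ is chosen by the linear theory, applied to the equation with prescribed source $-N(u)$, so that $w|_{t=T}=u_f$; this is possible because the inhomogeneous linear control problem with a source is solved by the same operator $\mathcal{K}$ together with the Duhamel term, giving $g_u$ as a bounded affine function of the data $(u_0,u_f)$ and a bounded linear function of $N(u)$. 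A fixed point $\Phi(u)=u$ then solves the nonlinear equation, carries $u_0$ to $u_f$, and has control $f=\mathbf{1}_\Omega g_u$ supported in $[0,T]\times\Omega$.

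The contraction is verified using the multilinear structure of the problem in $X^{s,b}_T$: the trilinear estimate $\norm{N(u)-N(\ti u)}_{X^{s,b-1}_T}\less(\norm{u}_{X^{s,b}_T}^2+\norm{\ti u}_{X^{s,b}_T}^2)\norm{u-\ti u}_{X^{s,b}_T}$ for $s\geq1$ on $\R^2\times\T$, which follows from the Strichartz and bilinear estimates available in the waveguide literature, combined with the boundedness of $\mathcal{K}$ and of the Duhamel operator, shows that $\Phi$ maps a ball of radius $\sim\delta$ into itself and is a contraction there once $\delta$ is small. The smallness hypothesis $\norm{u_0}_{H^s}+\norm{u_f}_{H^s}<\delta$ is exactly what makes the quadratic gain in the trilinear estimate beat the Lipschitz constant, yielding local exact controllability, and time reversibility of the Schr\"odinger flow handles the symmetry between $u_0$ and $u_f$.

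I expect the main obstacle to be the linear step on the non-compact factor. Unlike the compact case, mass can escape to infinity along the $\R^2$ directions, so the observability estimate must be proved with a control region that is only $2\pi\Z^2$-periodic and need not satisfy the geometric control condition; establishing it requires semiclassical and Fourier-analytic arguments uniform in the Euclidean frequency, and the passage from $L^2$ to $H^s$ observability through commutator and propagation arguments is delicate because the spatial cutoff does not commute with $\langle D\rangle^s$. A secondary difficulty is ensuring that the $H^s$-valued control operator produced by HUM interacts correctly with the $X^{s,b}_T$ framework, so that the trilinear estimate and the control bound live in compatible spaces; reconciling the energy-space HUM output with the Bourgain-space fixed point is the technical heart of the argument.
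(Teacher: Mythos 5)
Your overall scheme---$L^2$ observability on the waveguide, upgrade to $H^s$, HUM, then a perturbative fixed point in $X^{s,b}_T$---is the same as the paper's, but your nonlinear step takes a genuinely different (and equally standard) route. You iterate directly on the solution: $\Phi(u)$ solves the \emph{linear} equation with source $-N(u)$ and a control $g_u$ regenerated at each step to hit $u_f$, so the fixed point lives in a ball of $X^{s,b}_T$. The paper instead freezes the form of the control as $\mathcal{L}\Psi_0$, decomposes $u=v+\Psi$ with $\Psi$ the controlled linear solution and $v$ the backward nonlinear correction with $v|_{t=T}=0$, and runs the contraction on the auxiliary datum $\Psi_0\in H^s$ via $\mathcal{J}\vartheta=u_0-S^v_N\vartheta$; it also first proves only \emph{null} controllability and recovers the general target by time reversal and gluing at $t=T/2$, whereas your linear control operator steers to an arbitrary target directly, making time reversal unnecessary. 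Your version is arguably more economical at the level of bookkeeping; the paper's version keeps the control in the fixed explicit form $\varphi_T\chi_{\Omega}(1-\Delta_{x,y})^{-s}\varphi_T\chi_{\Omega}\mathcal{L}\Psi_0$ throughout, which is what guarantees the regularity and support properties claimed for $f$.

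Two points in your write-up need repair. First, the control $f=\mathbf{1}_{\Omega}g_u$ with a sharp indicator cannot satisfy the conclusion $f\in C([0,T];H^s(\R^2\times\T))$ for $s\geq1$, and the HUM operator built from $\mathbf{1}_{\Omega}$ alone is not an isomorphism between the correct Sobolev duals; this is precisely why the paper inserts the smoothing weight and works with $\varphi_T\chi_{\Omega}(1-\Delta_{x,y})^{-s}\varphi_T\chi_{\Omega}$, so that $\mathcal{K}=\ii R\circ S$ maps $H^{-s}$ isomorphically onto $H^s$ and the resulting control is genuinely $H^s$-valued. You use a smooth cutoff in the observability inequality but drop it in the control; you should keep the smoothed, frequency-weighted form there too. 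Second, the passage from $L^2$ to $H^s$ observability is not just a commutator estimate: the commutator $[\chi_{\Omega},(1-\Delta_{x,y})^{s/2}]$ only yields a \emph{weak} observability with a remainder $\norm{u_0}_{H^{s-1}}^2$, and removing that remainder requires a compactness--uniqueness argument showing that the space of invisible solutions $\mathcal{N}_T$ is trivial. That uniqueness step is where the stationary resolvent estimate (Theorem \ref{thm2}, proved via the Floquet--Bloch reduction to the torus) enters; calling it a ``propagation-of-regularity argument'' glosses over the ingredient that actually closes the estimate.
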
 
This is a local exact controllability result for NLS in the semi-periodic setting. The general strategy to establish the exact controllability is to reduce it to the \textit{null controllability}, i.e. the exact controllability with null final data ($u_{f}=0$). To be more specific, we first prove the following result.
\begin{theorem}[Null controllability]\label{thm4}
Let $T>0$. Let $\ep$ and $\Omega$ be the same as in Theorem \ref{thm1}. For any $s\geq 1$, there exists $\delta>0$, such that for all $u_0\in H^s(\R^2\times\T)$ satisfying that $\|u_0\|_{H^s(\R^2\times\T)}<\delta$, there exists a control function $g$ such that the unique solution $u\in X^{s,b}_T$ to
\begin{equation}\label{eq: NLS-u-g}
\left\{
\begin{array}{ll}
\ii\p_t u+\Delta_{x,y} u+\ep|u|^{2}u=\varphi_T\chi_{\Omega}(1-\Delta_{x,y})^{-s}\varphi_T\chi_{\Omega}g&\text{ on }[0,T]\times\R^2\times\T,  \\
 u|_{t=0}=u_0    & \text{ on }\R^2\times\T,
\end{array}
\right.
\end{equation}
fulfils $u|_{t=T}=0$. Furthermore, the control function $g$ verifies the following conditions
\begin{enumerate}
    \item $g\in C([0,T];H^{-s}(\R^2\times\T))$
    \item $\varphi_T(\cdot)=\varphi_1(\cdot/T)\in C^{\infty}(\R)$ where $\varphi_1(t)=1$ for $t\leq\frac{1}{2}$ and $\varphi_1(t)=0$ for $t\geq\frac{3}{4}$;
    \item $0\leq \chi_{\Omega}\in C^{\infty}(\R^2\times\mathbb{T})$ satisfies $\mathbf{1}_{\Omega'\times\Omega_2}\leq\chi_{\Omega}\leq\mathbf{1}_{\Omega_1\times\Omega_2}$, where  $\overline{\Omega'}\subset\Omega_1$.
\end{enumerate}
\end{theorem}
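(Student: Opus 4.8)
The plan is to prove Theorem~\ref{thm4} by combining a linear control construction via the Hilbert Uniqueness Method (HUM) with a fixed-point argument that absorbs the cubic nonlinearity. Write $U(t)=e^{\ii t\Delta_{x,y}}$ for the free Schr\"odinger group, which is unitary on every $H^\sigma(\R^2\times\T)$, and abbreviate the control operator by $\mathcal{B}(t)=\varphi_T\chi_\Omega(1-\Delta_{x,y})^{-s}\varphi_T\chi_\Omega$. The first observation is the factorization $\mathcal{B}(t)=C(t)C(t)^*$ with $C(t)=\varphi_T\chi_\Omega(1-\Delta_{x,y})^{-s/2}$, so that $\mathcal{B}(t)\colon H^{-s}\to H^s$ is self-adjoint and nonnegative; in particular the smoothing factor $(1-\Delta_{x,y})^{-s}$ guarantees $\mathcal{B}(t)g\in C([0,T];H^s)$ whenever $g\in C([0,T];H^{-s})$, which is exactly what is needed for the forcing term to be admissible in the $X^{s,b}_T$ solution theory.

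For the linear problem $\ii\p_t u+\Delta_{x,y}u=\mathcal{B}g$, $u|_{t=0}=u_0$, I would seek the control in the HUM form $g(t)=U(t)\phi$ with $\phi\in H^{-s}$ to be determined. Duhamel's formula turns the terminal condition $u(T)=0$ into the equation $\Lambda_T\phi=u_0$, where the Gramian operator is $\Lambda_T\phi=\ii\int_0^T U(-t)\mathcal{B}(t)U(t)\phi\,dt$. Setting $G_T=-\ii\Lambda_T$ one computes $\langle G_T\phi,\phi\rangle=\int_0^T\|\varphi_T\chi_\Omega U(t)\phi\|_{H^{-s}}^2\,dt$, so that $G_T$ is a self-adjoint nonnegative map $H^{-s}\to H^s$ whose coercivity is precisely the $H^{-s}$-observability inequality for the region $\Omega$. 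This observability is supplied by the linear analysis of the preceding sections: $L^2$-observability on the subinterval $[0,T/2]$ where $\varphi_T\equiv1$ is upgraded to the $H^{-s}$ level by conjugating with $(1-\Delta_{x,y})^{s/2}$, the resulting commutators with $\chi_\Omega$ being absorbed thanks to the nested cut-offs $\mathbf{1}_{\Omega'\times\Omega_2}\le\chi_\Omega\le\mathbf{1}_{\Omega_1\times\Omega_2}$. Coercivity makes $G_T$ an isomorphism, so $\phi$ is recovered by inverting $G_T$ and $g=U(\cdot)\phi\in C([0,T];H^{-s})$ depends boundedly on $u_0$. The same construction, applied to the effective initial datum $u_0-\ii\int_0^T U(-t)F(t)\,dt$, produces a bounded linear operator $\mathcal{K}\colon(u_0,F)\mapsto g$ driving the solution of $\ii\p_t u+\Delta_{x,y}u=\mathcal{B}g+F$ to rest at time $T$, and the time weight $\varphi_T$ (which vanishes for $t\ge 3T/4$) ensures the terminal condition is attained cleanly.

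The nonlinear step is a contraction mapping. On a small ball of $X^{s,b}_T$ I would define $\Gamma(w)=u$, where $u$ solves $\ii\p_t u+\Delta_{x,y}u=\mathcal{B}\,\mathcal{K}(u_0,-\ep|w|^2w)-\ep|w|^2w$ with $u|_{t=0}=u_0$; by construction $u(T)=0$, and a fixed point $u=\Gamma(u)$ solves \eqref{eq: NLS-u-g} with $u(T)=0$ and control $g=\mathcal{K}(u_0,-\ep|u|^2u)\in C([0,T];H^{-s})$. The relevant estimates are the trilinear bound $\| |w_1|^2w_1-|w_2|^2w_2\|_{X^{s,b-1}_T}\less(\|w_1\|_{X^{s,b}_T}^2+\|w_2\|_{X^{s,b}_T}^2)\|w_1-w_2\|_{X^{s,b}_T}$ on $\R^2\times\T$ for $s\ge1$ (from the well-posedness theory), together with the boundedness of $\mathcal{K}$ and of the Duhamel operator on Bourgain spaces. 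For $\|u_0\|_{H^s}<\delta$ with $\delta$ small, the quadratic gain in the nonlinear terms makes $\Gamma$ map a ball of radius $\sim\delta$ into itself and contract, giving the desired solution and control.

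The main obstacle is the observability input and its interaction with the weights. In the semi-periodic setting the control region $\Omega$ need not satisfy GCC and $\R^2$ is non-compact, so the $H^{s}$-level observability inequality --- rather than the fixed-point argument, which is by now routine once the estimates are in place --- is the delicate point; making the commutator absorption through the nested cut-offs rigorous at the $H^{-s}$ level, uniformly in frequency, is where the technical work concentrates. A secondary difficulty is verifying that the low-regularity HUM control $g\in C([0,T];H^{-s})$ is rendered admissible by the smoothing $(1-\Delta_{x,y})^{-s}$, so that $\mathcal{B}g$ enters the $X^{s,b}_T$ fixed point at the correct regularity.
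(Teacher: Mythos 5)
Your proposal is correct and rests on the same two pillars as the paper: the HUM isomorphism built from the $H^{\pm s}$ observability of Section 2, and a small-data contraction using the trilinear $X^{s,b}$ estimates. The difference is where the fixed point lives. The paper follows the Dehman--G\'erard--Lebeau/Laurent scheme: it decomposes $u=\Psi+v$, where $\Psi$ solves the \emph{linear} controlled equation with datum $\Psi_0$ and control $\mathcal{L}\Psi_0$, and $v$ solves the nonlinear equation backward from $v(T)=0$ with source $-\ep|u|^2u$; the unknown is $\Psi_0\in H^s$ and the map is $\mathcal{J}\vartheta=u_0-S_N^v\vartheta$. You instead iterate directly on the trajectory $w\in X^{s,b}_T$, recomputing at each step the control $\mathcal{K}(u_0,-\ep|w|^2w)$ for the linear problem with the frozen source, so that every iterate already satisfies $u(T)=0$. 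Both converge under the same smallness hypothesis; your version requires in addition the inhomogeneous HUM operator and the bound $\bigl\|\int_0^T e^{-\ii t\Delta}F\,dt\bigr\|_{H^s}\lesssim\|F\|_{X^{s,-b'}_T}$ (routine from the $X^{s,b}$ Duhamel estimates), while the paper's version keeps the control always of the exact form $\mathcal{L}\Psi_0$ and runs the contraction in the simpler space $H^s$. One caveat: your description of the $H^{\pm s}$ observability as the commutator with $(1-\Delta_{x,y})^{s/2}$ being ``absorbed'' by the nested cut-offs understates what is needed --- the commutator contributes a lower-order term $CT\|u_0\|^2_{H^{s-1}}$ that is not small and cannot be absorbed directly; the paper removes it by a compactness--uniqueness argument whose uniqueness input is the unique continuation property coming from the stationary resolvent estimate (Theorem \ref{thm2}). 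Since that observability is established independently in Section 2 and you invoke it as an input, this does not affect the validity of your proof of Theorem \ref{thm4}, but it should not be presented as a direct absorption.
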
 
To investigate the nonlinear case, a classic approach is to first study the linearized Schr\"odinger equation. The following theorem concerns solutions of the stationary Schr\"odinger equation and is applicable to high-energy eigenfunctions.
\begin{theorem}[Stationary estimate]\label{thm2}
Let $\Omega=(\Omega_1,\Omega_2)$ satisfy the condition (\textbf{G}). For all $\lambda \in \mathbb{R}$ we have the following estimate: for 
\begin{equation}
(\Delta_{x,y}-\lambda)u=f,    
\end{equation}
we have,
\begin{equation}\label{eq: est-unique-continuation}
\|u(x,y)\|_{L^2(\mathbb{R}^n \times \mathbb{T}^m)} \leq C(\|f(x,y)\|_{L^2(\mathbb{R}^n \times \mathbb{T}^m)}+\|u(x,y)\|_{L^2(\Omega)}),
\end{equation}
with $C$ independent of $\lambda$.
\end{theorem}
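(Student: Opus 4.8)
The plan is to eliminate the two non-compact directions $x\in\R^m$ by a Floquet--Bloch (Zak) transform, reducing estimate \eqref{eq: est-unique-continuation} to a family of \emph{stationary estimates on the compact torus} $\T^{m+n}=\T^m_x\times\T^n_y$, indexed by a Bloch frequency $\theta\in[0,1)^m$ in the (compact) Brillouin zone, and with a constant uniform in both $\theta$ and $\lambda$. The structural fact that makes this work is built into hypothesis (\textbf{G}): since $\Omega_1$ is $2\pi\Z^m$-invariant, multiplication by $\mathbf 1_{\Omega_1}(x)$ is a periodic multiplier and therefore commutes with the Bloch decomposition, so the observation term in \eqref{eq: est-unique-continuation} also splits fiberwise. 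Concretely, let $\mathcal Z$ denote the Bloch transform realizing the unitary equivalence
\[
L^2(\R^m\times\T^n)\;\cong\;\int^{\oplus}_{[0,1)^m} L^2(\T^m\times\T^n)\,d\theta ,
\]
under which $\Delta_{x,y}$ becomes the direct integral of the fiber operators $P_\theta=(\nabla_x+\ii\theta)^2+\Delta_y$, while multiplication by the periodic function $\mathbf 1_{\Omega_1\times\Omega_2}$ becomes fiberwise multiplication by $\mathbf 1_{\widetilde\Omega}$, with $\widetilde\Omega=\widetilde\Omega_1\times\Omega_2$ a \emph{nonempty open} subset of $\T^{m+n}$. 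Thus \eqref{eq: est-unique-continuation} follows by Plancherel for $\mathcal Z$ once one establishes, uniformly in $\theta$ and $\lambda$, the torus estimate
\[
\|v\|_{L^2(\T^{m+n})}\;\le\;C\big(\|(P_\theta-\lambda)v\|_{L^2(\T^{m+n})}+\|v\|_{L^2(\widetilde\Omega)}\big)
\]
and integrates it over the Brillouin zone.

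To prove the torus estimate I would argue by contradiction and compactness. Suppose it fails: then there exist sequences $\theta_j\to\theta_\infty$, $\lambda_j\in\R$ and $v_j$ with $\|v_j\|_{L^2}=1$, $\|(P_{\theta_j}-\lambda_j)v_j\|_{L^2}\to0$ and $\|v_j\|_{L^2(\widetilde\Omega)}\to0$. Since the spectrum of $P_\theta$ is contained in $(-\infty,0]$ uniformly in $\theta$, the case $\lambda_j\to+\infty$ is immediate, the resolvent being $O(1/\lambda_j)$. If $(\lambda_j)$ stays bounded, elliptic regularity gives an $H^2$ bound on $v_j$; after extraction $v_j\to v$ strongly in $L^2$ with $\|v\|_{L^2}=1$, $(P_{\theta_\infty}-\lambda_\infty)v=0$ and $v\equiv0$ on $\widetilde\Omega$, so Aronszajn's strong unique continuation for second-order elliptic operators forces $v\equiv0$, a contradiction.

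The remaining, and genuinely hard, case is $\lambda_j\to-\infty$. Here I would set $h_j=(-\lambda_j)^{-1/2}\to0$, rescale $(P_{\theta_j}-\lambda_j)$ semiclassically, and pass to a semiclassical defect measure $\mu$ of $(v_j)$. Because the Bloch shift $(\nabla_x+\ii\theta)^2$ differs from $\Delta_x$ only by terms of order $\le1$, the principal symbol, and hence the geodesic flow on $\T^{m+n}$, is independent of $\theta$; consequently $\mu$ is a probability measure on $S^*\T^{m+n}$, invariant under the straight-line geodesic flow, giving no mass above $\widetilde\Omega$. On a general manifold one would now need the geometric control condition to conclude $\mu=0$, which (\textbf{G}) does \emph{not} provide. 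The flat torus is special: the two-microlocal analysis of invariant quasimode measures (\cite{AM-JEMS,BBZ,BZ-MRL}, in the spirit of \cite{Jaffard90}) shows that such a measure cannot avoid any nonempty open set, whence $\mu\equiv0$, contradicting $\mu(S^*\T^{m+n})=1$. Since all constants depend only on the $\theta$-independent principal symbol and on $\widetilde\Omega$, one gets the uniformity in $\theta$ for free from the bounded convergent parameters $\theta_j$.

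The main obstacle is precisely this last step: propagating the vanishing of $v_j$ on $\widetilde\Omega$ to the whole torus at high frequency without GCC, which is exactly where the special observability of the flat torus from arbitrary open sets is indispensable, and where one must check that the bounded Bloch parameters do not disturb the defect-measure argument. Once the uniform torus estimate is secured, the Floquet--Bloch integration described above is routine and yields \eqref{eq: est-unique-continuation} with $C$ independent of $\lambda$.
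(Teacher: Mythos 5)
Your overall architecture coincides with the paper's for the first step: both reduce \eqref{eq: est-unique-continuation} by the partial Floquet--Bloch transform to a resolvent estimate on $\T^{m+n}$ for the twisted fiber operators, uniform in the Bloch parameter and in $\lambda$, using exactly the observation that the $2\pi\Z^m$-invariance of $\Omega_1$ makes the observation term split fiberwise. Where you diverge is in how the uniform torus estimate is obtained. The paper does not argue by contradiction on the stationary problem at all: it first proves observability of the twisted Schr\"odinger group $e^{\ii tH_\alpha}$ on $\T^{m+n}$ by conjugating $H_\alpha$ to $\Delta_{x,y}$ via the identity $e^{\ii tH_{\alpha}}=e^{-\ii t|\alpha|^2}\tau_{(2t\alpha,0)}e^{\ii t\Delta_{x,y}}$ (so that the Anantharaman--Maci\`a torus observability applies on a slightly shrunk control set, uniformly in $\alpha$), and then invokes Miller's abstract equivalence between group observability and the Hautus-type resolvent estimate to get Proposition~\ref{prop: resolvent on torus} in one stroke. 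Your route instead attacks the resolvent estimate directly by compactness and semiclassical defect measures. That is viable in principle, but be aware of two points. First, in your hard case $\lambda_j\to-\infty$, flow-invariance of $\mu$ plus vanishing over $\widetilde\Omega$ is \emph{not} a contradiction on the torus (the uniform measure on a closed geodesic avoiding $\widetilde\Omega$ is invariant); the entire content lies in the two-microlocal structure theorem for quasimode measures on flat tori, which is of the same depth as the observability theorem the paper cites, so your argument effectively re-derives rather than bypasses it, and you must also check that an $o(1)$ (rather than $o(h)$-type) quasimode error is admissible in that structure theory. Second, the claimed uniformity in $\theta$ "for free" at high frequency requires verifying that the order-one Bloch shift does not perturb the two-microlocal argument, which is precisely the kind of bookkeeping the paper's translation identity avoids. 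With those caveats discharged (or replaced by a citation to the uniform resolvent estimates of Burq--Zworski type), your proof closes; the paper's detour through group observability plus Miller's theorem buys a cleaner and shorter path to the same Proposition~\ref{prop: resolvent on torus}.
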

\begin{remark}
In particular, if $f=0$, this estimate implies a unique continuation property for the eigenfunctions of $-\Delta_{x,y}$. Indeed, if $f=0$, the total $L^2-$norm of $u$ can be bounded by its local $L^2-$norm in $\Omega$. As a consequence, if an eigenfunction $u$ vanishes in a subdomain $\Omega$, which verifies the condition (\textbf{G}), then, it vanishes everywhere. 
\end{remark}
Related to this stationary estimate, we have the following \textit{observability estimate} for linear Schr\"odinger propagator. Once we have it, using \textit{Hilbert uniqueness method} (HUM for short, see \cite{HUM} and Section \ref{sec: HUM}), we obtain the controllability result automatically for the linearized equation. 
\begin{theorem}[Observability]\label{thm3}
Let $\Omega=(\Omega_1,\Omega_2)$ satisfy the condition (\textbf{G}). For every $T>0$, there exists a constant $C=C(T,\Omega)$ such that for $\forall u_0\in L^2(\mathbb{R}^n \times \mathbb{T}^m)$,
\begin{equation*}
\|u_0\|^2_{L^2(\mathbb{R}^n \times \mathbb{T}^m)}\leq C\int_0^T\|e^{\ii t\Delta_{x,y}}u_0\|^2_{L^2(\Omega)}dt.
\end{equation*}
\end{theorem}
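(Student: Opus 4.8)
The plan is to read Theorem~\ref{thm2} as the \emph{uniform resolvent} (Hautus-type) condition that characterises exact observability of a unitary group, and then to invoke the corresponding abstract principle. Set $H=L^2(\mathbb{R}^m\times\mathbb{T}^n)$ and let $A=\Delta_{x,y}$, self-adjoint on $H$ with domain $H^2$; then $U(t)=e^{\ii t\Delta_{x,y}}$ is the unitary group generated by the skew-adjoint operator $\ii A$, and the observation is the bounded (hence trivially admissible) operator $B\colon u\mapsto \mathbf{1}_{\Omega}u$, for which $\norm{Bu}=\norm{u}_{L^2(\Omega)}$. In this language, Theorem~\ref{thm3} is exactly the statement that $(U,B)$ is exactly observable in every time $T>0$, i.e.\ $\norm{u_0}^2\less\int_0^T\norm{BU(t)u_0}^2\,dt$.

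First I would record that Theorem~\ref{thm2} is precisely the resolvent estimate. Putting $f=(\Delta_{x,y}-\lambda)u$ in \eqref{eq: est-unique-continuation} and squaring (using $(a+b)^2\le 2a^2+2b^2$) gives, for every $\lambda\in\R$ and $u\in H^2$,
\[
\norm{u}^2\le 2C^2\,\norm{(\Delta_{x,y}-\lambda)u}^2+2C^2\,\norm{u}_{L^2(\Omega)}^2 .
\]
Because $(\ii A-\ii\lambda)u=\ii(\Delta_{x,y}-\lambda)u$, we have $\norm{(\ii A-\ii\lambda)u}=\norm{(\Delta_{x,y}-\lambda)u}$, so this is exactly the Hautus inequality $\norm{u}^2\le M\norm{(\ii A-\ii\lambda)u}^2+m\norm{Bu}^2$ with $M=m=2C^2$ \emph{independent of $\lambda$} — the uniformity in $\lambda$ being precisely the last clause of Theorem~\ref{thm2}.

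Then I would apply the resolvent characterisation of observability for conservative systems (the Hautus test of Miller and of Ramdani--Takahashi--Tenenbaum--Tucsnak, applied to the Schr\"odinger group in the spirit of \cite{BZ-MRL}): a resolvent estimate of the above form, \emph{with constant uniform in $\lambda$}, forces exact observability of $e^{\ii t\Delta_{x,y}}$ through $B$ for \emph{every} $T>0$. The uniformity of the constant is exactly what makes the admissible observability time arbitrarily small; it is the feature that fails in the subelliptic models of \cite{Burq-Sun}, where the analogous constant degenerates as $\abs{\lambda}\to\infty$ and the minimal observability time becomes strictly positive. This finishes the proof.

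The main obstacle is functional-analytic. The textbook versions of the resolvent/observability equivalence are stated for generators with \emph{compact resolvent} (pure point spectrum), whereas the Euclidean factor forces $\Delta_{x,y}$ to have continuous spectrum, so the spectral-projection/wavepacket extraction underlying those proofs is not directly available. The sufficiency direction (resolvent estimate $\Rightarrow$ observability) does not in fact require spectral discreteness, but I would need to verify this carefully in the present non-compact setting — either by using a form of the Hautus principle valid for arbitrary skew-adjoint generators with bounded admissible observation, or by running the standard contradiction argument directly, which must then contend with the loss of compactness coming from the Euclidean directions. Handling that loss of compactness, rather than the algebraic reduction to Theorem~\ref{thm2}, is where the real work lies.
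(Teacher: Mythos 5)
There is a genuine gap in the key step. The abstract Hautus-type principle you invoke does not say what you need it to say: for a skew-adjoint generator with a bounded observation operator, the resolvent estimate $\norm{u}^2\le M\norm{(A-\lambda)u}^2+m\norm{Bu}^2$ with $M$ \emph{uniform} in $\lambda\in\R$ yields exact observability only in times $T>\pi\sqrt{M}$ (Miller, Ramdani--Takahashi--Tenenbaum--Tucsnak, Burq--Zworski), i.e.\ in \emph{some} finite time determined by the size of the constant, not in every $T>0$. Uniformity of the constant is not "exactly what makes the observability time arbitrarily small." A clean counterexample to the principle as you state it is the transport group generated by $A=-\ii\p_x$ on $\T^1$ observed from an open set $\omega$: the uniform resolvent estimate holds, yet observability requires $T$ at least as large as the longest gap of $\omega$. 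To extract arbitrarily small observability times from resolvent data one needs a high-frequency \emph{gain}, e.g.\ a constant $M(\lambda)\to 0$ as $\abs{\lambda}\to\infty$, which Theorem \ref{thm2} does not provide. Indeed, in the paper the implication runs the other way: Theorem \ref{thm2} is itself deduced (via Proposition \ref{prop: resolvent on torus} and Miller's theorem) \emph{from} observability of the twisted Laplacian on the torus, so it carries no more information than "observable in some time." Your secondary worry (compact resolvent, continuous spectrum) is comparatively harmless --- the sufficiency direction of the Hautus test holds for general skew-adjoint generators with bounded observation --- but it is not where the real difficulty lies.

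The paper's proof takes a different and more direct route: it conjugates the semi-periodic propagator to the twisted propagators $e^{\ii tH_\alpha}$ on $\T^{n+m}$ via the partial Floquet--Bloch transform, establishes observability for each $H_\alpha$ in \emph{every} time $T>0$ with constants uniform in $\alpha$ (Lemma \ref{lem: ob for H}, which rests on the identity $e^{\ii tH_\alpha}=e^{-\ii t\abs{\alpha}^2}\tau_{(2t\alpha,0)}e^{\ii t\Delta_{x,y}}$ together with the Anantharaman--Maci\`a observability theorem on the torus), and then integrates over $\alpha\in[0,1)^n$ using the isometry of Lemma \ref{lem: L^2 norm equivalence}. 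The arbitrarily-small-time conclusion is inherited from the torus theorem, which is a genuinely deeper input than the Hautus test; it cannot be recovered from the uniform resolvent estimate alone. If you want to salvage your approach, you would either have to prove a $\lambda$-dependent refinement of Theorem \ref{thm2} with decaying constant, or bypass the resolvent formulation entirely as the paper does.
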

\subsection{Strategy of the proofs}

Now we discuss the strategy of the proofs for the main theorems stated in the previous subsection. 

Our primary objective is to prove Theorem \ref{thm1}, i.e., the local exact controllability for nonlinear Schr\"odinger equations on $\R^2\times\T$. We first reduce it to the null controllability with certain control in a specific form (Theorem \ref{thm4}). Following the classic approach, we need to analyze the control problem for the corresponding linear equation:
\begin{equation*}
\left\{
\begin{array}{ll}
\ii\p_t u+\Delta_{x,y} u=f &\text{ on }[0,T]\times\R^2\times\T,  \\
 u|_{t=0}=u_0, u|_{t=T}=0.   & \text{ on }\R^2\times\T,
\end{array}
\right.
\end{equation*}
As a consequence of \textit{Hilbert Uniqueness Method} (see Section \ref{sec: HUM} for details), constructing the control operator $\mathcal{L}: u_0\mapsto f$ is equivalent to proving observability for any $s\in\R$,
\begin{equation}\label{eq: H^s-ob-intro}
\|u_0\|^2_{H^s(\mathbb{R}^2 \times \mathbb{T})}\leq C\int_0^T\|\chi_{\Omega}e^{\ii t\Delta_{x,y}}u_0\|^2_{H^s(\mathbb{R}^2 \times \mathbb{T})}dt.
\end{equation}
In order to prove \eqref{eq: H^s-ob-intro} for any $s\in\R$, we combine the $L^2-$observability (Theorem \ref{thm3}) with a unique continuation property, which is ensured by the resolvent estimate (Theorem \ref{thm2}). Inspired by \cite{wunsch2015periodic}, we prove these two results by reducing them into a tori setting by the application of \textit{Floquet-Bloch transform}. After a precise analysis of the linear problem, we follow the ideas in \cite{DGL-2006,Laurent-dim3} and decompose the solution $u$ to \eqref{eq: NLS-u-g} into $u=v+\Psi$, where $v$ solves the following equation
\begin{equation*}
\left\{
\begin{array}{l}
\ii\p_t v+\Delta_{x,y} v=-\ep|u|^2u,  \\
v|_{t=T}=0,  
\end{array}
\right.
\end{equation*}
and $\Psi$ is the solution to
\begin{equation*}
\left\{
\begin{array}{l}
\ii\p_t\Psi+\Delta_{x,y} \Psi=\varphi_T\chi_{\Omega}(1-\Delta_{x,y})^{-s}\varphi_T\chi_{\Omega}g,  \\
\Psi|_{t=0}=\Psi_0,\Psi|_{t=T}=0.  
\end{array}
\right.
\end{equation*}
The purpose is then to choose the adequate $\Psi_0$, and the system is completely determined. Let us define $\mathcal{J}: \Psi_0\mapsto u_0-v|_{t=0}$. Thus, seeking the control function $g$ is reduced to finding a fixed point for $\mathcal{J}$, provided that $\|u_0\|_{H^s}$ is small enough (see Section \ref{sec: 4} for details). The general result will follow by
reversing time (presented in Section \ref{sec: 5}).  

\subsection{Organization of the rest of this paper}
In Section \ref{sec: 2}, we prove linear observability and stationary estimates, which includes the proof for Theorem \ref{thm2}; in Section \ref{sec: 3}, we discuss well-posedness theory for NLS which will be used for the nonlinear case; in Section \ref{sec: 4}, we show the local null controllability of NLS, which completes the proof of Theorem \ref{thm4}; in Section \ref{sec: 5}, we obtain the exact controllability, which finishes the proof of Theorem \ref{thm1}.

\subsection{Notations}
Throughout this paper, we use $C$ to denote the universal constant and $C$ may change line by line. We say $A\lesssim B$, if $A\leq CB$. We say $A\sim B$ if $A\lesssim B$ and $B\lesssim A$. We also use the notation $C_{B}$ to denote a constant depends on $B$. We use the standard notation for $L^{p}$ spaces and $L^2$-based Sobolev spaces $H^{s}$.

We define the Fourier transform on $\mathbb{R}^n \times \mathbb{T}^m$ as follows:
\begin{equation}
    (\mathcal{F} f)(\xi)= \int_{\mathbb{R}^n \times \mathbb{T}^m}f(z)e^{-iz\cdot \xi}dz,
\end{equation}
where $\xi=(\xi_1,\xi_2,...,\xi_{d})\in \mathbb{R}^n \times \mathbb{Z}^m$ and $d=m+n$. We also note the Fourier inversion formula
\begin{equation}
    f(z)=c \sum_{(\xi_{n+1},...,\xi_{d})\in \mathbb{Z}^m} \int_{(\xi_1,...,\xi_{n}) \in \mathbb{R}^n} (\mathcal{F} f)(\xi)e^{iz\cdot \xi}d\xi_1...d\xi_n.
\end{equation}
For convenience, we may consider the discrete sum to be the integral with discrete measure so we can combine the above integrals together and treat them to be one integral. Moreover, we define the Schr{\"o}dinger propagator $e^{it\Delta}$ by
\begin{equation}
    \left(\mathcal{F} e^{it\Delta}f\right)(\xi)=e^{-it|\xi|^2}(\mathcal{F} f)(\xi).
\end{equation}

At last, we refer to \cite{fan2024long,ZZ} for the definitions and properties of Fourier restriction spaces (also known as Bourgain spaces) in the waveguide setting. They share very similar properties as the tori case. For the sake of completeness, we include some properties of these function spaces in the Appendix \ref{app}. These spaces are very useful and frequently used for the study of NLS on tori or waveguide manifolds.\footnote{We also refer to \cite{HTT1,HTT2} and the references therein for another type of function spaces, which are also very useful for the study of NLS on tori or waveguide manifolds (especially for the critical case).}
\subsection*{Acknowledgment}
We highly appreciate Prof. N. Burq for helpful suggestions and insightful discussions. J. Niu is supported by Defi Inria EQIP. Z. Zhao is supported by the NSF grant of China (No. 12101046, 12271032, 2426205) and the Beijing Institute of Technology Research Fund Program for Young Scholars.

\section{Linear observability and Stationary estimates}\label{sec: 2}
\subsection{The Floquet-Bloch transform}
In this part, we introduce the partial Floquet-Bloch transform. This tool is instrumental in the proof of Theorem \ref{thm2}. For more details, we refer to \cite{wunsch2015periodic}, \cite[Section 4]{Kuchment} and \cite[Section 2.1]{BM2023}. We first introduce the partial Floquet transform.
\begin{definition}
We define the partial Floquet-Bloch transform $\mathcal{T}^p: L^2(\R^n\times\T^m)\rightarrow L^2(\T^{m+n}\times[0,1)^n)$ by
\begin{equation*}
\mathcal{T}^pu(x,y,\alpha):=e^{\ii x\cdot \alpha}\sum_{k\in \mathbb{Z}^n}e^{2\pi\ii\alpha\cdot k}u(x+2k\pi,y),\;\forall u\in L^2(\R^n\times\T^m),\forall (x,y,\alpha)\in \T^{n}\times\T^{m}\times[0,1)^n. 
\end{equation*}

\begin{remark}
In particular, if $m=0$, we obtain the usual Floquet-Bloch transform. Furthermore, removing the factor $e^{\ii\alpha\cdot x}$, we obtain a partial Floquet transform 
\begin{equation*}
\Pi^pu(x,y,\alpha):=\sum_{k\in \mathbb{Z}^n}e^{2\pi\ii\alpha\cdot k}u(x+2k\pi,y),\;\forall u\in L^2(\R^n\times\T^m),\forall (x,y,\alpha)\in \T^{n}\times\T^{m}\times[0,1)^n. 
\end{equation*}
\end{remark}
\end{definition}
For convenience, for any $\alpha\in[0,1)^n$ and $ u\in L^2(\R^n\times\T^m)$, we define $\Pi_{\alpha}: L^2(\R^n\times\T^m)\rightarrow L^2(\T^{m+n})$ by  
\begin{equation}
    \Pi_{\alpha}u(x,y):=e^{\ii x\cdot \alpha}\sum_{k\in \mathbb{Z}^n}e^{2\pi\ii\alpha\cdot k}u(x+2k\pi,y),\;\forall (x,y)\in \T^{n}\times\T^{m}.
\end{equation}
The next proposition ensures that the partial Floquet-Bloch transform is an isometry from $L^2(\R^n\times\T^m)$ to $L^2(\T^{m+n}\times[0,1)^n)$.
\begin{proposition}
The map $\mathcal{T}^p$ is an isometric isomorphism from $L^2(\R^n\times\T^m)$ to $L^2(\T^{m+n}\times[0,1)^n)$.
\end{proposition}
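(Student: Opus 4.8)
The plan is to verify in turn the three defining properties of an isometric isomorphism: that $\mathcal{T}^p u$ genuinely belongs to $L^2(\T^{m+n}\times[0,1)^n)$ (in particular that it is $2\pi\Z^n$-periodic in $x$), that $\mathcal{T}^p$ preserves the $L^2$ norm, and that it is onto. To sidestep convergence issues in the defining lattice sum $\sum_{k\in\Z^n}e^{2\pi\ii\alpha\cdot k}u(x+2k\pi,y)$, I would first establish everything on the dense subspace of functions that are Schwartz (hence, say, rapidly decaying) in the variable $x$, where the sum converges absolutely and locally uniformly, and then extend to all of $L^2(\R^n\times\T^m)$ by density once the isometry is proved.

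First I would check that the output lies in the correct space. The factor $e^{\ii x\cdot\alpha}$ is unimodular, so it is irrelevant for the norm, but it is exactly what turns the quasi-periodic lattice sum into an honest periodic function: replacing $x$ by $x+2\pi e_j$ leaves $\mathcal{T}^p u$ unchanged, since the phase $e^{2\pi\ii\alpha_j}$ coming from $e^{\ii x\cdot\alpha}$ cancels the phase picked up by reindexing $k\mapsto k-e_j$ in the sum. Thus $\mathcal{T}^p u$ descends to a function of $x\in\T^n$, and together with the $y\in\T^m$ and $\alpha\in[0,1)^n$ dependence it defines a function on $\T^{m+n}\times[0,1)^n$.

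The heart of the argument is the isometry, which is a Parseval/Plancherel identity for the transform. Expanding $|\mathcal{T}^p u|^2$ and integrating, the unimodular factor drops out and one is left with the double sum $\sum_{k,k'}e^{2\pi\ii\alpha\cdot(k-k')}\,u(x+2k\pi,y)\,\overline{u(x+2k'\pi,y)}$. Integrating in $\alpha$ over the unit cube and using the orthogonality relation $\int_{[0,1)^n}e^{2\pi\ii\alpha\cdot(k-k')}\,d\alpha=\delta_{k,k'}$ annihilates all off-diagonal terms, leaving $\sum_{k}\int_{\T^n}\int_{\T^m}|u(x+2k\pi,y)|^2\,dy\,dx$. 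Finally, as $k$ ranges over $\Z^n$ the translates of the fundamental domain tile $\R^n$, so this sum reassembles $\int_{\R^n}\int_{\T^m}|u|^2=\|u\|_{L^2(\R^n\times\T^m)}^2$. This proves $\|\mathcal{T}^p u\|=\|u\|$ on the dense subspace, and the bounded-extension theorem then upgrades $\mathcal{T}^p$ to an isometry on all of $L^2(\R^n\times\T^m)$.

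It remains to prove surjectivity, which together with justifying the $L^2$-convergence of the series for general data is the only genuinely technical point; the real content is the computation above, and this last step is standard. For fixed $(x,y)\in\T^n\times\T^m$ the function $\alpha\mapsto e^{-\ii x\cdot\alpha}(\mathcal{T}^p u)(x,y,\alpha)$ is precisely the Fourier series on $[0,1)^n$ whose $k$-th coefficient is $u(x+2k\pi,y)$, so I would exhibit the explicit inverse
\begin{equation*}
u(x+2k\pi,y)=\int_{[0,1)^n}e^{-2\pi\ii\alpha\cdot k}\,e^{-\ii x\cdot\alpha}\,(\mathcal{T}^p u)(x,y,\alpha)\,d\alpha,
\end{equation*}
which recovers $u$ on each translate of the fundamental domain, hence on all of $\R^n\times\T^m$. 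Applying the same formula to an arbitrary $v\in L^2(\T^{m+n}\times[0,1)^n)$ produces a candidate preimage; checking that it is consistent (independent of the chosen representative $x$, using the periodicity of $v$) shows $\mathcal{T}^p$ is onto. Alternatively, since an isometry between Hilbert spaces has closed range, it would suffice to note that the range is dense — it contains all finite combinations of the form $e^{2\pi\ii\alpha\cdot k}g(x,y)$, immediate from the inversion formula — but I would present the explicit inverse, as it is cleaner and makes the isomorphism transparent.
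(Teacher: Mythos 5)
Your proof is correct, and for the isometry it takes a genuinely different (and somewhat more elementary) route than the paper. The paper proves the norm identity through Lemma \ref{lem: L^2 norm equivalence}: it computes the Fourier coefficients of $\Pi_{\alpha}g$ on $\T^{m+n}$, identifies them as $\mathcal{F}_x(\widehat{g}(\cdot,l))(k-\alpha)$, and then applies Plancherel twice (on the torus and on $\R^n\times\T^m$) after integrating in $\alpha$. You instead expand $|\mathcal{T}^p u|^2$ directly in physical space, kill the off-diagonal terms of the double lattice sum with the character orthogonality $\int_{[0,1)^n}e^{2\pi\ii\alpha\cdot(k-k')}\,d\alpha=\delta_{k,k'}$, and reassemble $\R^n$ from the translates of the fundamental domain; restricting first to data rapidly decaying in $x$ and extending by density is the right way to justify the interchange of sum and integral. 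The paper's Fourier-side computation buys the slightly stronger localized statement (the norm identity over $\Omega_1\times\T^m$ versus $\Omega_0\times\T^m$, obtained by applying the identity to $\mathds{1}_{\Omega_1\times\Omega_2}g$), which is what is actually needed later; your computation gives the global identity just as cleanly but would need the same cutoff trick for the localized version. You also go further than the paper on one point: the paper never addresses surjectivity of $\mathcal{T}^p$ (it only asserts the proposition ``is based on'' the lemma), whereas you exhibit the explicit inverse via the Fourier series in $\alpha$ and check its consistency under $x\mapsto x+2\pi e_j$, which genuinely completes the ``isomorphism'' half of the claim.
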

This proposition is based on the following lemma.
\begin{lemma}\label{lem: L^2 norm equivalence}
We have the equality of $L^2$ norms    
\begin{equation}\label{eq: L^2-norm isometry}
  \|g\|^2_{L^2(\mathbb{R}^n \times \mathbb{T}^m)}=\int_{[0,1)^n} \|\Pi_{\alpha} g\|^2_{L^2(\mathbb{T}^n \times \mathbb{T}^m)} d\alpha  .
\end{equation}
More generally, if $\Omega_1 \in \mathbb{R}^n$ is $2\pi \mathbb{Z}^n$-invariant and $\Omega_0$ denotes its projection
to $\mathbb{T}^n$
\begin{equation}
     \|g\|^2_{L^2(\Omega_1 \times \mathbb{T}^m)}=\int_{[0,1)^n} \|\Pi_{\alpha} g\|^2_{L^2(\Omega_0 \times \mathbb{T}^m)} d\alpha .
\end{equation}
\end{lemma}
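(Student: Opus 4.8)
The plan is to prove both identities by a direct computation: expand the squared norm of $\Pi_\alpha g$, integrate the quasimomentum $\alpha$ over the fundamental cell $[0,1)^n$ to collapse the double sum via orthogonality, and then unfold the resulting periodic sum back to the full space $\R^n$. Since the Floquet series $\sum_{k\in\Z^n} e^{2\pi\ii\alpha\cdot k}g(x+2k\pi,y)$ need only converge in the $L^2$ sense for general data, I would first establish the identities for $g$ with compact support in the $x$-variable (so that the sum over $k$ is locally finite and every pointwise manipulation is legitimate), and then pass to arbitrary $g\in L^2(\R^n\times\T^m)$ by density: the first identity says precisely that the linear map $g\mapsto(\alpha\mapsto\Pi_\alpha g)$ is an $L^2$-isometry on a dense subspace and hence extends to one on the whole space, while the weighted identity involves two quadratic forms each dominated by $\|g\|^2_{L^2(\R^n\times\T^m)}$ and therefore continuous.

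For the first identity, fix such a $g$ and note that $|e^{\ii x\cdot\alpha}|=1$, so
\[
|\Pi_\alpha g(x,y)|^2 = \sum_{k,k'\in\Z^n} e^{2\pi\ii\alpha\cdot(k-k')}\, g(x+2k\pi,y)\,\overline{g(x+2k'\pi,y)}.
\]
Integrating over $\alpha\in[0,1)^n$ and using the orthogonality relation $\int_{[0,1)^n}e^{2\pi\ii\alpha\cdot(k-k')}\,d\alpha=\delta_{k,k'}$ annihilates all off-diagonal terms, leaving
\[
\int_{[0,1)^n}|\Pi_\alpha g(x,y)|^2\,d\alpha = \sum_{k\in\Z^n}|g(x+2k\pi,y)|^2.
\]
I would then integrate in $(x,y)$ over $\T^n\times\T^m$, apply Tonelli to exchange the (now nonnegative) sum and integral, and invoke the tiling of $\R^n$ by the translates $[0,2\pi)^n+2k\pi$ of the fundamental domain: summing $\int_{[0,2\pi)^n}|g(x+2k\pi,y)|^2\,dx$ over $k\in\Z^n$ reassembles $\int_{\R^n}|g(x,y)|^2\,dx$. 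This yields exactly $\|g\|^2_{L^2(\R^n\times\T^m)}$.

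For the weighted identity, the only additional ingredient is the $2\pi\Z^n$-invariance of $\Omega_1$. Repeating the computation with the $(x,y)$-integration restricted to $\Omega_0\times\T^m$ gives $\int_{[0,1)^n}\|\Pi_\alpha g\|^2_{L^2(\Omega_0\times\T^m)}\,d\alpha=\sum_{k\in\Z^n}\int_{\widetilde\Omega_0\times\T^m}|g(x+2k\pi,y)|^2\,dx\,dy$, where $\widetilde\Omega_0\subset[0,2\pi)^n$ is the lift of $\Omega_0$. Because $\Omega_1$ is invariant under $2\pi\Z^n$, the translates $\widetilde\Omega_0+2k\pi$ tile $\Omega_1$ exactly, so the sum reassembles $\int_{\Omega_1}|g(x,y)|^2\,dx\,dy=\|g\|^2_{L^2(\Omega_1\times\T^m)}$.

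The computation is elementary, so there is no deep obstacle here; the only point genuinely requiring care is the justification of the two interchanges of summation and integration together with the unfolding step. I expect the mild obstacle to be making the unfolding rigorous for general $L^2$ data, since then the Floquet sum converges only in $L^2$ and the pointwise double-sum expansion above is not literally valid, which is exactly why I would route the argument through compactly supported $g$ and a density argument rather than manipulating the series directly. The invariance hypothesis on $\Omega_1$ is precisely what guarantees that the tiling used in the unfolding respects the restriction to $\Omega_0$, so no geometric control–type input is needed at this stage.
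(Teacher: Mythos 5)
Your argument is correct, but it runs on the physical side where the paper works on the Fourier side. The paper computes the Fourier coefficients of the periodized function, obtaining $\widehat{\Pi_{\alpha}g}(k,l)=\mathcal{F}_x(\widehat{g}(\cdot,l))(k-\alpha)$ — so the Floquet transform samples the full Fourier transform on the shifted lattice $\mathbb{Z}^n-\alpha$ — and then integrates over $\alpha\in[0,1)^n$, invoking Plancherel on $\mathbb{T}^{n+m}$ on one side and on $\R^n\times\mathbb{T}^m$ on the other. You instead expand $|\Pi_\alpha g|^2$ as a double sum, use the character orthogonality $\int_{[0,1)^n}e^{2\pi\ii\alpha\cdot(k-k')}\,d\alpha=\delta_{k,k'}$ to diagonalize, and unfold the resulting lattice sum over the tiling of $\R^n$ by translates of the fundamental cell; the two computations are essentially dual to one another. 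What your route buys is that it is self-contained (no Plancherel needed) and more careful about convergence: your reduction to compactly supported $g$ followed by density, and the observation that both quadratic forms in the weighted identity are dominated by $\|g\|_{L^2}^2$, addresses an interchange of sum and integral that the paper glosses over. What the paper's route buys is brevity in the weighted case: rather than redoing the computation with the tiling of $\Omega_1$ by translates of the lift of $\Omega_0$, it simply applies the unweighted identity to $\mathds{1}_{\Omega_1\times\mathbb{T}^m}\,g$, using that the indicator of a $2\pi\mathbb{Z}^n$-invariant set commutes with the periodization — a one-line reduction you could also have used to shorten your second step.
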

\begin{proof}
We use Fubini to compute the Fourier coefficients of the periodic
functions $\Pi_{\alpha}g$ on $\T^{m+n}$.
\begin{align*}
    \widehat{\Pi_{\alpha}g}(k,l)&=\frac{1}{(2\pi)^{\frac{n+m}{2}}}\int_{\T^m}\int_{\T^n}e^{-\ii x\cdot k}e^{-\ii y\cdot l}\sum_{j\in\Z^n}e^{\ii x\cdot \alpha}e^{2\pi\ii\alpha\cdot j}g(x+2j\pi,y)dxdy\\
    &=\frac{1}{(2\pi)^{\frac{n+m}{2}}}\int_{\T^m}\int_{\R^n}e^{\ii x\cdot (\alpha-k)}e^{-\ii y\cdot l}g(x,y)dxdy\\
    &=\mathcal{F}_x (\widehat{g}(\cdot,l))(k-\alpha).
\end{align*}
Integrating the sum of squares of the right hand side over the unit cube $[0,1)^n$ gives $\|g\|^2_{L^2(\mathbb{R}^n \times \mathbb{T}^m)}$ by Fubini and Plancherel theorems on $\R^n\times\T^m$, while on the left hand side, we get
\begin{equation*}
\int_{[0,1)^n}\sum_{k\in\Z^n,l\in\Z^m}| \widehat{\Pi_{\alpha}g}(k,l)|^2d\alpha=\int_{[0,1)^n}\| \Pi_{\alpha}g\|_{L^2(\T^{n+m})}^2d\alpha
\end{equation*}
by Plancherel on the torus. The generalization to taking the norm over $\Omega_1\times\Omega_2$ is proved simply by applying \eqref{eq: L^2-norm isometry} to the function $\mathds{1}_{\Omega_1\times\Omega_2}g$.
\end{proof}

\subsection{Observability for partial-twisted Laplacian}
Let $\alpha\in\R^n$, we set 
\begin{equation*}
    H_{\alpha}:=(\partial_x-\ii \alpha)^2+\Delta_y.
\end{equation*}
Note that these operators are all self-adjoint with the same domain independent of $\alpha$. We first recall the Schr\"odinger observability on the torus
\begin{theorem}{\cite[Theorem 4]{AM-JEMS}}
For every non-empty open set $\Omega\subset\T^d$ and every $T>0$ there exists a constant $C=C(T,\Omega)$ such that 
\begin{equation*}
    \|u_0\|^2_{L^2(\T^d)}\leq C\int_0^T\|e^{\ii t \Delta}u_0\|^2_{L^2(\Omega)}dt
\end{equation*}
for every initial datum $u_0\in L^2(\T^d)$.
\end{theorem}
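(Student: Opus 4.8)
This is the Anantharaman--Maci\`a observability estimate for the free Schr\"odinger flow on the flat torus, which is quoted here as an external input; in the body of the paper it is used as a black box and I would treat it the same way. If instead one wanted to reprove it, the only known route is the method of semiclassical (Wigner) defect measures, and I sketch that plan below.

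First I would argue by contradiction. Suppose the estimate fails: then there is a sequence $(u_k)\subset L^2(\T^d)$ with $\|u_k\|_{L^2(\T^d)}=1$ and $\int_0^T\|e^{\ii t\Delta}u_k\|_{L^2(\Omega)}^2\,dt\to 0$. Set $v_k(t)=e^{\ii t\Delta}u_k$. After a Littlewood--Paley decomposition I would separate the (bounded) low-frequency part, handled by a soft compactness--uniqueness argument, from the high-frequency part $|\xi|\to\infty$, to which I attach a time-dependent semiclassical measure $\mu$ on $T^*\T^d$ at the natural scale $h_k\sim|\xi_k|^{-1}$. The vanishing observation forces $\int_0^T\mu\bigl(t,\Omega\times\R^d\bigr)\,dt=0$, while the normalization forces $\mu$ to carry positive mass.

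The crux is to contradict this using the dynamics of $\mu$. The Schr\"odinger evolution makes $\mu$ invariant under the geodesic flow $(x,\xi)\mapsto(x+2t\xi,\xi)$, which on the torus is completely integrable: orbits in totally irrational directions equidistribute over all of $\T^d$, so their $x$-marginal is the Haar measure and necessarily charges the open set $\Omega$, a contradiction. The genuine difficulty lies in the \emph{resonant} directions $\xi$ satisfying rational relations, whose orbits remain on lower-dimensional affine subtori that may avoid $\Omega$ entirely. Here I would second-microlocalize along each resonant sublattice, as in Burq--Zworski \cite{BZ-MRL} and Anantharaman--Maci\`a \cite{AM-JEMS}: the resulting two-microlocal measure is no longer annihilated by the degenerate flow but instead evolves according to a \emph{lower-dimensional} Schr\"odinger equation on the subtorus, and this genuine propagation again pushes mass into $\Omega$. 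Iterating downward through the finite hierarchy of resonances shows that a flow-invariant measure vanishing on a nonempty open set must vanish identically, which is the desired contradiction.

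The main obstacle is exactly this second step. The completely integrable flow on $\T^d$ does \emph{not} satisfy the geometric control condition, so Lebeau's classical propagation-of-singularities argument \cite{Leb-Sch} is unavailable, and the entire content of \cite{AM-JEMS} lies in the two-microlocal propagation law on resonant subtori that substitutes for it. Reproducing this is considerably harder than every other step in this section, which is precisely why the result is invoked rather than reproved.
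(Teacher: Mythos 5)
You have correctly identified that the paper imports this statement as an external black box (Theorem 4 of Anantharaman--Maci\`a) and offers no proof of its own, which is exactly how you treat it. Your optional sketch of the actual argument --- contradiction via semiclassical defect measures, invariance under the completely integrable geodesic flow, and two-microlocal propagation along resonant subtori to handle the directions where the geometric control condition fails --- is an accurate summary of the cited proof, so nothing further is needed.
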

We use this classical theorem to prove the following lemma
\begin{lemma}\label{lem: ob for H}
For every non-empty open set $\Omega=(\Omega_0,\Omega_2)\subset\T^{n}\times\T^m$, and every $T>0$ there exists a constant $C=C(T,\Omega)$ such that 
\begin{equation*}
    \|u_0\|^2_{L^2(\T^{n+m})}\leq C\int_0^T\|e^{\ii t H_{\alpha}}u_0\|^2_{L^2(\Omega)}dt
\end{equation*}
for every initial datum $u_0\in L^2(\T^{n+m})$.
\end{lemma}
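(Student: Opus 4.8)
The plan is to reduce the observability for $H_\alpha$ to the already-quoted observability for the free Schr\"odinger flow on $\T^{n+m}$, by realizing $e^{\ii t H_\alpha}$ as a Galilean boost of $e^{\ii t\Delta}$. Both propagators act diagonally in the Fourier basis $\{e^{\ii(k\cdot x+l\cdot y)}\}_{(k,l)\in\Z^n\times\Z^m}$, and a direct computation gives $H_\alpha e^{\ii(k\cdot x+l\cdot y)}=-(|k-\alpha|^2+|l|^2)e^{\ii(k\cdot x+l\cdot y)}$. Expanding $|k-\alpha|^2=|k|^2-2\alpha\cdot k+|\alpha|^2$ splits the symbol of $e^{\ii tH_\alpha}$ into a global phase $e^{-\ii t|\alpha|^2}$, the free phase $e^{-\ii t(|k|^2+|l|^2)}$, and a factor $e^{2\ii t\alpha\cdot k}$ which is exactly a translation by $2t\alpha$ in the $x$-variable. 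Thus the first step is to establish the intertwining identity
\[
e^{\ii t H_\alpha}u_0(x,y)=e^{-\ii t|\alpha|^2}\,\big(e^{\ii t\Delta}u_0\big)(x+2t\alpha,\,y),\qquad (x,y)\in\T^n\times\T^m.
\]

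From here the global phase is irrelevant for $L^2$ norms, and a measure-preserving change of variables on the torus yields, for each fixed $t$,
\[
\|e^{\ii t H_\alpha}u_0\|_{L^2(\Omega)}^2=\|e^{\ii t\Delta}u_0\|_{L^2((\Omega_0+2t\alpha)\times\Omega_2)}^2 .
\]
So the observation effectively takes place through the \emph{moving} window $(\Omega_0+2t\alpha)\times\Omega_2$, and what remains is to compare this with a fixed observation set to which the cited torus estimate applies. The main obstacle is precisely this time dependence of the window.

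I would resolve it by a short-time, fixed-ball argument. Choose $x_0\in\Omega_0$ and $r>0$ with $\overline{B(x_0,2r)}\subset\Omega_0$ (taking $r$ small so there is no wrap-around on $\T^n$), set $\omega:=B(x_0,r)$, and pick $T':=\min\{T,\,r/(2\sqrt{n})\}$. For $\alpha\in[0,1)^n$ one has $|\alpha|<\sqrt{n}$, hence for all $t\in[0,T']$ and $x\in\omega$, $|x-2t\alpha-x_0|<r+2t|\alpha|<2r$, which gives $\omega-2t\alpha\subset\Omega_0$, i.e. $\omega\subset\Omega_0+2t\alpha$. Restricting the time integral to $[0,T']$ and the spatial integral to $\omega\times\Omega_2$ then produces
\[
\int_0^T\|e^{\ii t H_\alpha}u_0\|_{L^2(\Omega)}^2\,dt\ \geq\ \int_0^{T'}\|e^{\ii t\Delta}u_0\|_{L^2(\omega\times\Omega_2)}^2\,dt .
\]

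Finally I would invoke the quoted observability on $\T^{n+m}$ (Theorem 4 of \cite{AM-JEMS}) with the fixed nonempty open set $\omega\times\Omega_2$ and time $T'$, namely $\|u_0\|_{L^2(\T^{n+m})}^2\le C(T',\omega\times\Omega_2)\int_0^{T'}\|e^{\ii t\Delta}u_0\|_{L^2(\omega\times\Omega_2)}^2\,dt$, and chain it with the previous inequality. I want to emphasize that since $T'$ and $\omega$ were selected using only the bound $|\alpha|<\sqrt{n}$ (for general bounded $\alpha$ one would use an a priori upper bound for $|\alpha|$), the resulting constant is \emph{independent of} $\alpha\in[0,1)^n$. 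This uniformity in $\alpha$ is exactly what will be needed when one integrates over $\alpha$ in the Floquet--Bloch decomposition of Lemma \ref{lem: L^2 norm equivalence} to deduce the $L^2$-observability on $\R^n\times\T^m$ (Theorem \ref{thm3}).
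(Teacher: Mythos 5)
Your proposal is correct and follows essentially the same route as the paper: the same intertwining identity $e^{\ii tH_\alpha}=e^{-\ii t|\alpha|^2}\tau_{(2t\alpha,0)}e^{\ii t\Delta_{x,y}}$, the same shrinking of the observation set to a smaller $\omega$ with $\overline{\omega}\subset\Omega_0$ together with a short time $T'$ chosen via the uniform bound $|\alpha|<\sqrt{n}$ so that the translated window stays inside $\Omega$, and the same appeal to the torus observability of \cite{AM-JEMS}. Your write-up is if anything slightly more explicit about the quantitative choice of $T'$ and about why the constant is uniform in $\alpha$, which is exactly the point the paper also emphasizes.
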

\begin{proof}
We point out that this lemma is almost a restatement of \cite[Proposition 4]{wunsch2015periodic}. When $m=0$, the twisted Laplacian is defined in \cite[Eq. (3)]{wunsch2015periodic} satisfies a similar observability. The proof of \cite[Proposition 4]{wunsch2015periodic} can also be applied here without changes. For the completeness of our paper, we present the proof adapted to our setting.
For $\Omega_0\subset\T^{n}$, we can find a nonempty open set $\omega_0\subset\T^n$ such that $\overline{\omega_0}\subset \Omega_0$. We set $\omega=(\omega_0,\Omega_2)\subset\Omega$. Hence there exists $T > 0$ such that $x_1\in\omega_0$ and $d(x_1, x_2) < 4\sqrt{n}T$ imply $x_2\in\Omega_0$. Using the operator identity
\begin{equation*}
e^{\ii tH_{\alpha}}=e^{\ii t(\partial_x-\ii\alpha)^2}e^{\ii\Delta_y}=e^{-\ii t|\alpha|^2}\tau_{(2t\alpha,0)}e^{\ii t\Delta_{x,y}},
\end{equation*}
where for $\theta=(\theta_1,\theta_2)\in \R^{n+m}$, $\tau_{\theta}$ denotes the translation operator $\tau_{\theta}f(x,y):=f(x+\theta_1,y+\theta_2)$. Thus, by the $H_0-$observability and the choice of $T\ll 1$ so that $\tau_{(-2t\alpha,0)}(\omega)\subset \Omega$ for $\forall t\in[0,T]$, we obtain 
\begin{align*}
\|u_0\|^2_{L^2(\T^{n+m})}&\leq C\int_0^T\|e^{\ii t \Delta_{x,y}}u_0\|^2_{L^2(\omega)}dt\\
&\leq C\int_0^T\|e^{-\ii t|\alpha|^2}\tau_{(2t\alpha,0)}e^{\ii t\Delta_{x,y}}u_0\|^2_{L^2(\tau_{(-2t\alpha,0)}(\omega))}dt\\
&\leq C\int_0^T\|e^{-\ii t|\alpha|^2}\tau_{(2t\alpha,0)}e^{\ii t\Delta_{x,y}}u_0\|^2_{L^2(\Omega)}dt\\
&\leq C\int_0^T\|e^{\ii t H_{\alpha}}u_0\|^2_{L^2(\Omega)}dt.
\end{align*}
\end{proof}
Thanks to \cite[Theorem 5.1]{Miller-JFA}, we have the following proposition.
\begin{proposition}\label{prop: resolvent on torus}
Let $\Omega=(\Omega_0,\Omega_2) \subset \mathbb{T}^n\times\T^m$ be open and nonempty. For all $\alpha \in [0,1)^n$ and
\begin{equation}
(H_{\alpha}-\lambda)u=f,    
\end{equation}
posed on $\mathbb{T}^{n+m}$, we have,
\begin{equation}
\|u\|_{L^2(\mathbb{T}^{n+m})} \leq C(\|f\|_{L^2( \mathbb{T}^{n+m})}+\|u\|_{L^2(\Omega)}),
\end{equation}
with constants independent of $\alpha$ and $\lambda \in \mathbb{R}$.
\end{proposition}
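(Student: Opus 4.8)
\emph{Strategy.} My plan is to deduce this resolvent estimate directly from the observability estimate of Lemma~\ref{lem: ob for H}; this is precisely the mechanism underlying \cite[Theorem 5.1]{Miller-JFA}, which I would reproduce here in order to keep track of the constants. First I would fix a time $T>0$ for which Lemma~\ref{lem: ob for H} holds, and record that its constant $C=C(T,\Omega)$ can be taken \emph{uniform in} $\alpha\in[0,1)^n$: in the proof of that lemma the admissible $T$ is dictated only by the requirement $\tau_{(-2t\alpha,0)}(\omega)\subset\Omega$ for $t\in[0,T]$, and since $\alpha$ ranges over the bounded set $[0,1)^n$ the displacement $2t\alpha$ stays uniformly bounded on $[0,T]$, so a single $T$ and a single constant serve every $\alpha$.

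\emph{Comparison with the free phase.} Given a solution of $(H_{\alpha}-\lambda)u=f$, I would first note that $H_{\alpha}u=f+\lambda u\in L^2(\T^{n+m})$, so $u\in D(H_{\alpha})$ and $t\mapsto e^{\ii tH_{\alpha}}u$ is differentiable. The key idea is to compare the flow $e^{\ii tH_{\alpha}}u$ with the pure rotation $e^{\ii t\lambda}u$, which is what one would see if $u$ were a genuine eigenfunction at energy $\lambda$. Setting $v(t):=e^{-\ii t\lambda}e^{\ii tH_{\alpha}}u$ and using that $e^{\ii tH_{\alpha}}$ commutes with $H_{\alpha}$,
\begin{equation*}
v'(t)=\ii\,e^{-\ii t\lambda}e^{\ii tH_{\alpha}}(H_{\alpha}-\lambda)u=\ii\,e^{-\ii t\lambda}e^{\ii tH_{\alpha}}f,
\end{equation*}
so $\norm{v'(t)}_{L^2}=\norm{f}_{L^2}$ by unitarity. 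Integrating from $0$ with $v(0)=u$ gives
\begin{equation*}
\norm{e^{\ii tH_{\alpha}}u-e^{\ii t\lambda}u}_{L^2(\T^{n+m})}=\norm{v(t)-u}_{L^2}\leq t\,\norm{f}_{L^2},\qquad t\in[0,T],
\end{equation*}
and restricting to $\Omega$ (where $|e^{\ii t\lambda}|=1$) yields $\norm{e^{\ii tH_{\alpha}}u}_{L^2(\Omega)}\leq\norm{u}_{L^2(\Omega)}+T\norm{f}_{L^2}$ for every $t\in[0,T]$.

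\emph{Conclusion.} Finally I would insert this pointwise-in-$t$ bound into the observability inequality,
\begin{equation*}
\norm{u}_{L^2(\T^{n+m})}^2\leq C\int_0^T\norm{e^{\ii tH_{\alpha}}u}_{L^2(\Omega)}^2\,dt\leq CT\big(\norm{u}_{L^2(\Omega)}+T\norm{f}_{L^2}\big)^2,
\end{equation*}
and take square roots to obtain $\norm{u}_{L^2(\T^{n+m})}\leq C'\big(\norm{f}_{L^2}+\norm{u}_{L^2(\Omega)}\big)$ with $C'$ depending only on $C$ and $T$. Since $\lambda$ enters only through the unimodular phase $e^{\ii t\lambda}$, the constant is manifestly independent of $\lambda$, and it is independent of $\alpha$ by the first step.

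\emph{Main obstacle.} The analytic content is elementary; the one point requiring care is the \emph{uniformity of the constant in} $\alpha$. This is not visible from the statement of Lemma~\ref{lem: ob for H} and has to be extracted from its proof, as indicated above—the boundedness of $[0,1)^n$ is exactly what lets a single observation time $T$, hence a single constant, work across all the twisted operators $H_{\alpha}$. Uniformity in $\lambda$, by contrast, is automatic.
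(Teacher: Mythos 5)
Your proof is correct and follows essentially the same route as the paper: the paper disposes of this proposition by citing \cite[Theorem 5.1]{Miller-JFA}, and your argument is precisely the standard elementary proof of that implication (observability for $e^{\ii tH_{\alpha}}$ plus the comparison $\|e^{\ii tH_{\alpha}}u-e^{\ii t\lambda}u\|_{L^2}\leq t\|f\|_{L^2}$), built on the same input, Lemma~\ref{lem: ob for H}. Your explicit attention to the uniformity of the observability constant in $\alpha$ is a point the paper leaves implicit, and it is handled correctly.
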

\subsection{Stationary estimate: proof of Theorem \ref{thm2}}
\begin{proof}[Proof of Theorem \ref{thm2}]
First, we notice that
\begin{equation}
    H_{\alpha}-\lambda=(\partial_{x}-\ii\alpha)^2+\Delta_y-\lambda=e^{\ii\alpha \cdot x}(\Delta_{x,y}-\lambda)e^{-i\alpha \cdot x},
\end{equation}
where $\alpha\in [0,1)^n$. Thus $(\Delta_{x,y}-\lambda)u=f$ yields
\begin{equation}
    (H_{\alpha}-\lambda)e^{\ii\alpha\cdot x}u=e^{\ii\alpha\cdot x}f.
\end{equation}
Applying the partial Floquet trnaform $\Pi^p$ on both sides  and using translation-invariance of $H_{\alpha}$, we get an equation on the torus:
\begin{equation}
   (H_{\alpha}-\lambda)\Pi_{\alpha} u=\Pi_{\alpha} f \textmd{ on } \mathbb{T}^{n+m}.    
\end{equation}
Applying Proposition \ref{prop: resolvent on torus}, we obtain for every $\alpha$ in a fundamental domain
(and with constants independent of $\alpha$)
\begin{equation*}
\|\Pi_{\alpha} u\|_{L^2(\mathbb{T}^{n+m})} \leq C(\|\Pi_{\alpha} f\|_{L^2( \mathbb{T}^{n+m})}+\|\Pi_{\alpha} u\|_{L^2(\Omega_0)}).
\end{equation*}
Now by Lemma \ref{lem: L^2 norm equivalence} we may integrate both sides in $\alpha_1$ to obtain Theorem \ref{thm2}.
\end{proof}

\begin{remark}
The idea of establishing the estimate \eqref{eq: est-unique-continuation} in a periodic setting through a twisted Laplacian was first raised by Wunsch in \cite{wunsch2015periodic}. The correspondence between $H_{\alpha}$ on tori and $\Delta$ in periodic setting introduced in \cite{wunsch2015periodic} may look simple but turns out to be a powerful tool. 
\end{remark}

\subsection{Observability in $\R^m\times\T^n$: Proof of Theorem \ref{thm3}}
\begin{proof}[Proof of Theorem \ref{thm3}]
Let $u(t):=e^{\ii t\Delta_{x,y}}u_0$ and $v(t):=\Pi_{\alpha}u(t)$, for $\alpha\in[0,1)^n$. Then $u$ is the solution to 
\begin{equation*}
\lf\{
\begin{array}{ll}
     (\ii\partial_t +\Delta_{x,y})u=0&\text{in } (0,T)\times\R^n\times\T^m,  \\
     u|_{t=0}=u_0& \text{in } \R^n\times\T^m,
\end{array}
\ri.
\end{equation*}
and $v$ is the solution to 
\begin{equation*}
\lf\{
\begin{array}{ll}
     (\ii\partial_t +H_{\alpha})v=0&\text{in } (0,T)\times\T^n\times\T^m, \\
     v|_{t=0}=\Pi_{\alpha}u_0& \text{in } \T^n\times\T^m.
\end{array}
\ri.
\end{equation*}
By Lemma \ref{lem: ob for H}, we obtain
\begin{equation*}
\|\Pi_{\alpha}u_0\|^2_{L^2(\T^n\times\T^m)}\leq C\int_0^T\|v(t)\|^2_{L^2(\Omega)}dt=C\int_0^T\|\Pi_{\alpha}u(t)\|^2_{L^2(\Omega)}dt.
\end{equation*}
Integrating the LHS over $[0,1)^n$ gives $\|u_0\|^2_{L^2(\R^n\times\T^m)}$ by Lemma \ref{lem: L^2 norm equivalence}, while on the RHS, we get
\begin{equation*}
\int_{[0,1)^n}\int_0^T\|\Pi_{\alpha}u(t)\|^2_{L^2(\Omega)}dtd\alpha=\int_0^T\|u(t)\|^2_{L^2(\Omega)}dt,
\end{equation*}
by Fubini and Lemma \ref{lem: L^2 norm equivalence}. We conclude that
\begin{equation*}
\|u_0\|^2_{L^2(\R^n\times\T^m)}\leq C\int_0^T\|e^{\ii t\Delta_{x,y}}u_0\|^2_{L^2(\Omega)}dt
\end{equation*}
\end{proof}
\begin{proposition}\label{prop: H^s-ob}
Let $\Omega=(\Omega_1,\Omega_2)$ satisfy the condition (\textbf{G}). For every $T>0$ and $s\geq1$, there exists a constant $C=C(T,\Omega,s,\chi_{\Omega})$ such that for $\forall u_0\in H^s(\mathbb{R}^n \times \mathbb{T}^m)$
\begin{equation}\label{eq: H^s-ob-chi}
\|u_0\|^2_{H^s(\mathbb{R}^n \times \mathbb{T}^m)}\leq C\int_0^T\|\chi_{\Omega}e^{\ii t\Delta_{x,y}}u_0\|^2_{H^s(\mathbb{R}^n \times \mathbb{T}^m)}dt,
\end{equation}
where $0\leq \chi_{\Omega}\in C^{\infty}(\mathbb{R}^n \times \mathbb{T}^m)$ satisfies $\mathbf{1}_{\Omega}\leq\chi_{\Omega}$.    
\end{proposition}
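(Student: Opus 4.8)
The plan is to upgrade the $L^2$-observability of Theorem~\ref{thm3} to the $H^s$-observability~\eqref{eq: H^s-ob-chi} by combining two ingredients: the scale-invariance of the free Schr\"odinger flow under Fourier multipliers, and the unique continuation consequence of the stationary estimate (Theorem~\ref{thm2}). The essential point is that $e^{\ii t\Delta_{x,y}}$ commutes with the Bessel potential $\langle D\rangle^s:=(1-\Delta_{x,y})^{s/2}$, so if one sets $w_0:=\langle D\rangle^s u_0$, then $\langle D\rangle^s e^{\ii t\Delta_{x,y}}u_0=e^{\ii t\Delta_{x,y}}w_0$ and $\|u_0\|_{H^s}=\|w_0\|_{L^2}$. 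Applying Theorem~\ref{thm3} to $w_0$ gives
\begin{equation*}
\|u_0\|^2_{H^s(\R^n\times\T^m)}=\|w_0\|^2_{L^2}\leq C\int_0^T\|e^{\ii t\Delta_{x,y}}w_0\|^2_{L^2(\Omega)}\,dt=C\int_0^T\|\langle D\rangle^s e^{\ii t\Delta_{x,y}}u_0\|^2_{L^2(\Omega)}\,dt.
\end{equation*}
So the $L^2$-norm of $\langle D\rangle^s u(t)$ on $\Omega$ is already controlled; the remaining gap is that the right-hand side of~\eqref{eq: H^s-ob-chi} involves $\|\chi_\Omega e^{\ii t\Delta_{x,y}}u_0\|_{H^s}$, i.e.\ the cutoff applied \emph{before} the derivatives are taken, rather than $\|\langle D\rangle^s e^{\ii t\Delta_{x,y}}u_0\|_{L^2(\Omega)}$, the derivatives applied before the localization. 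Reconciling these two orderings is where the real work lies.

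First I would reduce to proving the frequency-localized observability and then deal with the commutator $[\langle D\rangle^s,\chi_\Omega]$. The standard strategy for this type of statement, following~\cite{DGL-2006,Laurent-dim3}, is a compactness-uniqueness argument. One argues by contradiction: suppose~\eqref{eq: H^s-ob-chi} fails, so there is a sequence $(u_0^k)$ with $\|u_0^k\|_{H^s}=1$ but $\int_0^T\|\chi_\Omega e^{\ii t\Delta_{x,y}}u_0^k\|_{H^s}^2\,dt\to 0$. Passing to a weak limit $u_0^k\rightharpoonup u_0$ in $H^s$, the limit satisfies $\chi_\Omega e^{\ii t\Delta_{x,y}}u_0=0$ on $[0,T]$, hence $e^{\ii t\Delta_{x,y}}u_0$ vanishes on $[0,T]\times\Omega$. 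By the unique continuation property furnished by Theorem~\ref{thm2} (applied to the spectral/stationary components of the propagated data, exactly as in the Remark following that theorem), this forces $u_0=0$. One then must rule out the possibility $\|u_0\|_{H^s}=1$ surviving in the limit, i.e.\ upgrade the weak convergence to strong; this is where the $L^2$-observability of Theorem~\ref{thm3} enters to pin down the low/compact part of the sequence, while the high-frequency tail is handled directly by the frequency-localized estimate above.

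The main obstacle I expect is the interplay between the smooth cutoff $\chi_\Omega$ and the nonlocal operator $\langle D\rangle^s$, since $\chi_\Omega\langle D\rangle^s\neq\langle D\rangle^s\chi_\Omega$ and the commutator is an order $s-1$ operator that is not supported in $\Omega$. Concretely, to pass from the clean inequality with $\|\langle D\rangle^s e^{\ii t\Delta_{x,y}}u_0\|_{L^2(\Omega)}$ (which the scaling argument produces for free) to the stated form with $\chi_\Omega$ outside the $H^s$-norm, one writes $\langle D\rangle^s(\chi_\Omega v)=\chi_\Omega\langle D\rangle^s v+[\langle D\rangle^s,\chi_\Omega]v$ and must absorb the commutator term. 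Since $[\langle D\rangle^s,\chi_\Omega]$ has order $s-1$, it is controlled by $\|v\|_{H^{s-1}}$, a strictly lower-order norm; the compactness-uniqueness scheme is precisely designed to absorb such lower-order remainders, because the embedding $H^s\hookrightarrow H^{s-1}$ is \emph{locally} compact and the contradiction hypothesis makes the leading term vanish. The condition $s\geq 1$ and the requirement $\mathbf 1_\Omega\leq\chi_\Omega$ both feed into making this absorption quantitative. I would therefore organize the proof as: (i) the scaling identity reducing to the $L^2$ case; (ii) the commutator decomposition isolating an $H^{s-1}$ remainder; (iii) a compactness-uniqueness argument using Theorem~\ref{thm2} to kill the remainder and conclude.
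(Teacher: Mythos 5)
Your steps (i) and (ii) coincide with the paper's proof: one applies the $L^2$-observability of Theorem~\ref{thm3} to $(1-\Delta_{x,y})^{s/2}u_0$, commutes the propagator with the Bessel potential, writes $\chi_{\Omega}e^{\ii t\Delta_{x,y}}(1-\Delta_{x,y})^{s/2}u_0=(1-\Delta_{x,y})^{s/2}\chi_{\Omega}e^{\ii t\Delta_{x,y}}u_0+[\chi_{\Omega},(1-\Delta_{x,y})^{s/2}]e^{\ii t\Delta_{x,y}}u_0$, and uses that the commutator has order $s-1$ to obtain the weak observability with an $H^{s-1}$ remainder, which is then removed by compactness--uniqueness. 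Up to that point your proposal is the paper's argument.

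The gap is in your uniqueness step. You claim that once the weak limit $u_0$ satisfies $\chi_{\Omega}e^{\ii t\Delta_{x,y}}u_0=0$ on $[0,T]$, Theorem~\ref{thm2} ``applied to the spectral/stationary components of the propagated data'' forces $u_0=0$. But $\Delta_{x,y}$ on $\R^n\times\T^m$ has continuous spectrum, so a general invisible datum admits no eigenfunction expansion, and Theorem~\ref{thm2} only applies to solutions of the stationary equation $(\Delta_{x,y}-\lambda)u=f$; as stated, this step does not go through. The paper bridges it with an extra idea you are missing: introduce the space $\mathcal{N}_{T}$ of invisible data, observe that the weak observability gives $\|f\|_{H^s}\leq C\|f\|_{H^{s-1}}$ for every $f\in\mathcal{N}_{T}$, so $\mathcal{N}_{T}$ is \emph{finite-dimensional}; then show $\mathcal{N}_{T+\delta}=\mathcal{N}_{T}$ for small $\delta$ and pass to the difference quotient $\lim_{\delta\to0^+}(e^{\ii\delta\Delta_{x,y}}f-f)/(\ii\delta)$ to conclude that $\mathcal{N}_{T}$ is invariant under $\Delta_{x,y}$. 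A nontrivial finite-dimensional invariant subspace must contain a genuine eigenfunction $\phi$ with $\chi_{\Omega}\phi=0$, and only at that point does the stationary estimate (or Theorem~\ref{thm3}) yield $\phi\equiv0$ and hence $\mathcal{N}_{T}=\{0\}$. Without the finite-dimensionality-plus-invariance reduction, your appeal to Theorem~\ref{thm2} has nothing to act on, and the contradiction $1\leq 2C_0CT\|u\|^2_{H^{s-1}}$ with $u=0$ cannot be closed.
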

\begin{proof}
By the definition of $\chi_{\Omega}$ and Theorem \ref{thm3}, we can easily derive that $\exists C_0=C_0(T,\Omega)>0$ such that for $\forall u_0\in L^2(\mathbb{R}^n \times \mathbb{T}^m)$ 
\begin{equation}\label{eq: L^2-ob-chi}
\|u_0\|^2_{L^2(\R^n\times\T^m)}\leq C_0\int_0^T\|\chi_{\Omega}e^{\ii t\Delta_{x,y}}u_0\|^2_{L^2(\mathbb{R}^n \times \mathbb{T}^m)}dt.
\end{equation}
For the initial datum $u_0\in  H^s(\mathbb{R}^n \times \mathbb{T}^m)$, 
\begin{equation*}
\|u_0\|^2_{H^s(\mathbb{R}^n \times \mathbb{T}^m)}=\|(1-\Delta_{x,y})^{\frac{s}{2}}u_0\|^2_{L^2(\mathbb{R}^n \times \mathbb{T}^m)}\leq C_0\int_0^T\|\chi_{\Omega}e^{\ii t\Delta_{x,y}}(1-\Delta_{x,y})^{\frac{s}{2}}u_0\|^2_{L^2(\mathbb{R}^n \times \mathbb{T}^m)}dt.    
\end{equation*}
Thanks to Cauchy–Schwarz inequality and the fact that
$$
\chi_{\Omega}e^{\ii t\Delta_{x,y}}(1-\Delta_{x,y})^{\frac{s}{2}}u_0=(1-\Delta_{x,y})^{\frac{s}{2}}\chi_{\Omega}e^{\ii t\Delta_{x,y}}u_0+[\chi_{\Omega},(1-\Delta_{x,y})^{\frac{s}{2}}]e^{\ii t\Delta_{x,y}}u_0,
$$
we obtain
\begin{align*}
\|u_0\|^2_{H^s(\mathbb{R}^n \times \mathbb{T}^m)}&\leq 2C_0\int_0^T\|(1-\Delta_{x,y})^{\frac{s}{2}}\chi_{\Omega}e^{\ii t\Delta_{x,y}}u_0\|^2_{L^2(\mathbb{R}^n \times \mathbb{T}^m)}dt\\
&+2C_0\int_0^T\|[\chi_{\Omega},(1-\Delta_{x,y})^{\frac{s}{2}}]e^{\ii t\Delta_{x,y}}u_0\|^2_{L^2(\mathbb{R}^n \times \mathbb{T}^m)}dt.
\end{align*}
Since the commutator $[\chi_{\Omega},(1-\Delta_{x,y})^{\frac{s}{2}}]$ is a pseudo-differential operator of order $s-1$, we know that 
$\|[\chi_{\Omega},(1-\Delta_{x,y})^{\frac{s}{2}}]e^{\ii t\Delta_{x,y}}u_0\|^2_{L^2(\mathbb{R}^n \times \mathbb{T}^m)}\leq C\|e^{\ii t\Delta_{x,y}}u_0\|^2_{H^{s-1}(\mathbb{R}^n \times \mathbb{T}^m)}\leq C\|u_0\|^2_{H^{s-1}(\mathbb{R}^n \times \mathbb{T}^m)}$. 
Thus,
\begin{equation}\label{eq: weak-ob-H^s}
\|u_0\|^2_{H^s(\mathbb{R}^n \times \mathbb{T}^m)}\leq 2C_0\int_0^T\|\chi_{\Omega}e^{\ii t\Delta_{x,y}}u_0\|^2_{H^s(\mathbb{R}^n \times \mathbb{T}^m)}dt+2C_0CT\|u_0\|^2_{H^{s-1}(\mathbb{R}^n \times \mathbb{T}^m)}.
\end{equation}
This is the so-called \textit{weak observability} in $H^s(\mathbb{R}^n \times \mathbb{T}^m)$. We apply a standard procedure based on compactness-uniqueness arguments to derive the observability \eqref{eq: H^s-ob-chi} from the weak observability \eqref{eq: weak-ob-H^s}. Suppose that \eqref{eq: H^s-ob-chi} is false. 
Then we find a sequence $\{u_0^k\}_{k\in\N^*}\subset H^s(\mathbb{R}^n \times \mathbb{T}^m)$ such that 
\begin{gather}
\|u_0^k\|^2_{H^s(\mathbb{R}^n \times \mathbb{T}^m)}=1,\label{eq: norm-1}\\
\int_0^{T}\|\chi_{\Omega}e^{\ii t\Delta_{x,y}}u_0^k\|^2_{H^s(\mathbb{R}^n \times \mathbb{T}^m)}dt\rightarrow0.\label{eq: limit of integral}
\end{gather}
The second statement \eqref{eq: limit of integral} leads to 
\begin{equation}\label{eq: vanish in subdomain}
\chi_{\Omega}e^{\ii t\Delta_{x,y}}u_0^k\rightarrow0, \text{ strongly in }L^2((0,T);H^s(\mathbb{R}^n \times \mathbb{T}^m)), \text{ as }k\rightarrow\infty.
\end{equation}
Thanks to \eqref{eq: norm-1}, we can extract a subsequence (still denoted by $u_0^k$ for simplicity) such that $u_0^k\rightharpoonup u$ weakly in $H^s(\mathbb{R}^n \times \mathbb{T}^m)$. By the compact inclusion $i: H^s(\mathbb{R}^n \times \mathbb{T}^m)\rightarrow H^{s-1}(\mathbb{R}^n \times \mathbb{T}^m)$, we deduce that $u_0^k\rightarrow u$ strongly in $H^{s-1}(\mathbb{R}^n \times \mathbb{T}^m)$. Then we define a subspace $\mathcal{N}_{T_0}$ in $H^s(\mathbb{R}^n \times \mathbb{T}^m)$, collecting initial data that generate invisible solutions in $(0,T_0)$, by
\begin{equation}
\mathcal{N}_{T_0}:=\{f\in H^s(\mathbb{R}^n \times \mathbb{T}^m): \chi_{\Omega}e^{\ii t\Delta_{x,y}}f=0 \text{ for }t\in(0,T_0)\}.
\end{equation}
\begin{lemma}\label{lem: null space}
$\mathcal{N}_{T}=\{0\}$.
\end{lemma}
Using Lemma \ref{lem: null space}, we finish the proof of the observability. Using the weak observability \eqref{eq: weak-ob-H^s} and the condition \eqref{eq: limit of integral}, we obtain
\begin{align*}
1=\lim_{k\rightarrow\infty}\|u_0^k\|^2_{H^s(\mathbb{R}^n \times \mathbb{T}^m)}&\leq \lim_{k\rightarrow\infty}\left(2C_0\int_0^{T}\|\chi_{\Omega}e^{\ii t\Delta_{x,y}}u^k_0\|^2_{H^s(\mathbb{R}^n \times \mathbb{T}^m)}dt+2C_0CT\|u^k_0\|^2_{H^{s-1}(\mathbb{R}^n \times \mathbb{T}^m)}\right)\\
&\leq \lim_{k\rightarrow\infty}2C_0CT\|u^k_0\|^2_{H^{s-1}(\mathbb{R}^n \times \mathbb{T}^m)}\\
&\leq 2C_0CT\lim_{k\rightarrow\infty}\|u^k_0\|^2_{H^{s-1}(\mathbb{R}^n \times \mathbb{T}^m)}.
\end{align*}
By the strong convergence of $u^k_0$ in $H^{s-1}(\mathbb{R}^n \times \mathbb{T}^m)$ and the condition \eqref{eq: vanish in subdomain}, we know that $u\in \mathcal{N}_{T}=\{0\}$ and $1\leq 2C_0CT\|u\|^2_{H^{s-1}(\mathbb{R}^n \times \mathbb{T}^m)}$, which is a contradiction. Hence, the observability \eqref{eq: H^s-ob-chi} holds.

\end{proof}
Now we are in a position to prove Lemma \ref{lem: null space}.
\begin{proof}[Proof of Lemma \ref{lem: null space}]
By the definition of $\mathcal{N}_T$, for any $\delta>0$, we deduce that $\mathcal{N}_{T+\delta}\subset\mathcal{N}_{T}$. Then, we claim that $\mathcal{N}_{T}$ is of finite dimension $\forall T>0$. Indeed, thanks to the weak observability and the definition of $\mathcal{N}_{T}$, $\forall f\in \mathcal{N}_{T}$,
\begin{align*}
\|f\|^2_{H^s(\mathbb{R}^n \times \mathbb{T}^m)}&\leq 2C_0\int_0^{T}\|\chi_{\Omega}e^{\ii t\Delta_{x,y}}f\|^2_{H^s(\mathbb{R}^n \times \mathbb{T}^m)}dt+2C_0CT\|f\|^2_{H^{s-1}(\mathbb{R}^n \times \mathbb{T}^m)}\\
&= 2C_0CT\|f\|^2_{H^{s-1}(\mathbb{R}^n \times \mathbb{T}^m)}.
\end{align*}
This implies that $\mathcal{N}_{T}$ is of finite dimension. Moreover, combining with the fact that $\mathcal{N}_{T_0+\delta}\subset\mathcal{N}_{T_0}$, $\forall \delta>0$, there exists $\delta_0>0$ such that $\forall \delta\in(0,\delta_0)$, $\mathcal{N}_{T_0+\delta}=\mathcal{N}_{T_0}$. 
For any $\delta\in(0,\delta_0)$ and $\forall f\in \mathcal{N}_{T_0+\delta}=\mathcal{N}_{T_0}$, $\chi_{\Omega}e^{\ii t\Delta_{x,y}}e^{\ii\delta\Delta_{x,y}}f=0$, for $\forall t\in(0,T_0)$. Consequently, we know that $e^{\ii\delta\Delta_{x,y}}f\in\mathcal{N}_{T_0}$. Thus, $\Delta_{x,y}f=\lim_{\delta\rightarrow0^+}\frac{e^{\ii\delta\Delta_{x,y}}f-f}{\ii\delta}\in\mathcal{N}_{T_0}$, which implies that $\mathcal{N}_{T_0}$ is stable under $\Delta_{x,y}$. Suppose $\mathcal{N}_{T_0}\neq\{0\}$. Since $\mathcal{N}_{T_0}$ is of finite dimension, it must contain an eigenfunction of $\Delta_{x,y}$. Let us denote this eigenfunction by $\phi\neq0$ and its associated eigenvalue $\lambda_0\in\R$. Thus, $\phi$ satisfies
\begin{equation*}
    \Delta_{x,y}\phi=\lambda_0\phi,\chi_{\Omega}e^{\ii \lambda_0t}\phi=0,\forall t\in(0,T_0).
\end{equation*}
Applying Theorem \ref{thm3}, we know that $\phi\equiv0$, which is a contradiction. This leads to $\mathcal{N}_{T_0}=\{0\}$.
\end{proof}

\section{Well-posedness of NLS}\label{sec: 3}
In this section, we include some well-posedness results of NLS. As in Theorem \ref{thm1} and Theorem \ref{thm4}, we consider a specific and popular model: 3D cubic NLS in energy space and higher.
\begin{theorem}\label{thm: well-posedness of NLS-source}
Let $T>0$, $s\geq 1$, and $\ep=\pm1$. For every $f\in L^2([-T,T],H^s)$ and $u_0\in H^s$, there is a unique solution $u\in X^{s,b}_T$ to
\begin{equation}\label{eq: nls-f}
\left\{
\begin{array}{ll}
\ii\p_t u+\Delta u+\ep|u|^{2}u=f &\text{ on }[-T,T]\times\R^2\times\T  \\
 u|_{t=0}=u_0    & \text{ on }\R^2\times\T
\end{array}
\right.
\end{equation}
Moreover the flow map is Lipschitz on every bounded subset.
\end{theorem}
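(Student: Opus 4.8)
The plan is to prove local well-posedness and Lipschitz dependence for the cubic NLS with source term \eqref{eq: nls-f} by the standard contraction-mapping argument in Bourgain spaces $X^{s,b}_T$, following the template that is by now classical on tori and waveguides (see \cite{ZZ} and the appendix). The whole scheme rests on a single nonlinear ingredient: a trilinear estimate for the cubic nonlinearity in the $X^{s,b}$ scale. I expect this trilinear estimate to be the main obstacle, since it encodes all the dispersive/arithmetic information specific to the manifold $\R^2\times\T$; everything else is soft functional-analytic bookkeeping.

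\textbf{Setup and reduction.} First I would recast \eqref{eq: nls-f} in Duhamel form,
\begin{equation*}
u(t)=e^{\ii t\Delta}u_0+\ii\int_0^t e^{\ii(t-t')\Delta}\bigl(\ep|u|^2u-f\bigr)(t')\,dt',
\end{equation*}
and define the map $\Phi(u)$ to be the right-hand side, localized in time by a smooth cutoff $\psi(t/T)$. I would fix $b=\tfrac12^+$ and work in the space $X^{s,b}_T$. The linear propagator and the Duhamel operator obey the standard $X^{s,b}$ bounds recalled in Appendix \ref{app}:
\begin{equation*}
\|e^{\ii t\Delta}u_0\|_{X^{s,b}_T}\lesssim \|u_0\|_{H^s},\qquad
\Bigl\|\int_0^t e^{\ii(t-t')\Delta}F(t')\,dt'\Bigr\|_{X^{s,b}_T}\lesssim T^{\theta}\|F\|_{X^{s,b-1}_T}
\end{equation*}
for some $\theta>0$, together with the inhomogeneous gain $\|f\|_{X^{s,b-1}_T}\lesssim T^{\theta'}\|f\|_{L^2_tH^s}$ absorbing the source term. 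The only genuinely nontrivial input is the multilinear bound
\begin{equation*}
\bigl\||u|^2u\bigr\|_{X^{s,b-1}_T}\lesssim \|u\|_{X^{s,b}_T}^3,
\end{equation*}
valid for $s\geq1$ on $\R^2\times\T$, which is where the Strichartz/scaling structure of the waveguide enters.

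\textbf{Contraction and uniqueness.} Granting the trilinear estimate, the nonlinear term contributes a factor $T^{\theta}\|u\|_{X^{s,b}_T}^3$ and the source a factor $T^{\theta'}\|f\|_{L^2_tH^s}$, so on the closed ball of radius $R=2C(\|u_0\|_{H^s}+\|f\|_{L^2_tH^s})$ the map $\Phi$ maps the ball to itself and is a contraction once $T$ is chosen small enough (depending on $R$) that $C T^{\theta}R^2<\tfrac12$; the difference $\Phi(u)-\Phi(v)$ is controlled by the same trilinear estimate applied to the factored telescoping identity $|u|^2u-|v|^2v$, which is multilinear in $u-v$. This yields a unique fixed point on a short time interval, hence a unique solution in $X^{s,b}_T$, and the same difference estimate upgrades to Lipschitz continuity of the flow map on bounded subsets. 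Since $s\geq1$ places us at or above the energy level where the relevant scaling is subcritical on $\R^2\times\T$, the conservation of mass and energy (for the genuine equation $f=0$) lets me iterate the local argument to reach any prescribed $T>0$; with the source $f\in L^2([-T,T],H^s)$ present, one instead partitions $[-T,T]$ into finitely many subintervals on which the a priori $H^s$ bound stays controlled and patches the solutions together. The crux, to reiterate, is establishing the trilinear $X^{s,b}$ estimate uniformly on the waveguide; once that is in hand the remainder of the proof is the routine fixed-point machinery.
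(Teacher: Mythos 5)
Your proposal follows essentially the same route as the paper's own (sketched) proof: Duhamel formulation with a time cutoff, contraction mapping in $X^{s,b}_T$ using the linear bound $\|e^{\ii t\Delta}u_0\|_{X^{s,b}_T}\lesssim\|u_0\|_{H^s}$, the inhomogeneous gain $T^{1-b-b'}$ to absorb the source via $L^2_tH^s\hookrightarrow X^{s,-b'}_T$, and the trilinear estimates of Lemma \ref{lmtrilinXsb} for the cubic term and its differences. The only difference is that you spell out the globalization step (iterating the local argument over a partition of $[-T,T]$) which the paper delegates to the references, so your write-up is, if anything, slightly more complete.
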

\begin{remark}
We can replace $\R^2\times\T$ by $\R\times\T^2$ (still 3-dimensional manifold) and the proof follows in the same way since the corresponding estimates still work.    
\end{remark}
\begin{proof}\label{Sketch of the proof}
The proof is similar to the proof of Theorem 2.1 in \cite{laurent2010global}. In fact, our case is easier since we do not have a damping term. We only need to handle the source term $f$.

First, we notice that if $f\in L^2([-T,T],H^s)$, it also belongs to $X^{s,-b'}_T$ as $b'\geq 0$ due to the property of function spaces. Moreover, it suffices to consider positive times. The solution on $[-T,0]$ can be obtained similarly.

We consider the functional
 $$\Phi(u)(t)=e^{it\Delta_x}u_0-i\int_0^t e^{i(t-\tau)\Delta_x}\left[\lambda \left|u\right|^2 u+ f\right](\tau) d\tau .$$
One can apply a standard fixed point argument on the Banach space $X^{s,b}_T$.

We recall that, for $T\leq 1$ we have
$$\left\|e^{it\Delta_x}u_0\right\|_{X^{s,b}_T}\leq C \left\|u_0\right\|_{H^s}.$$

This handles the linear term. Next, regarding the source term, one can use
$ \left\|\psi(t/T)\int e^{i(t-\tau)\Delta_x}F(\tau)\right\|_{X^{s,b}} \leq CT^{1-b-b'}\left\|F\right\|_{X^{s,-b'}}$ to deal with the source term $f$.

Then the proof can be reduced to the case without the source. We refer to Bourgain \cite{bourgain1993fourier}.
\end{proof}
For the nonlinear equation \eqref{eq: nls-f}, we can decompose it near a linear solution $\Psi$ given by
\begin{equation}
\left\{
\begin{array}{l}
\ii\p_t\Psi+\Delta_{x,y} \Psi=f,  \\
 \Psi|_{t=0}=\Psi_0\in H^s(\R^2\times\T)  
\end{array}
\right.
\end{equation}
We write $u=\Psi+v$. Then $v$ satisfies 
\begin{equation}\label{eq: v-NLS-well-posedness}
\left\{
\begin{array}{l}
\ii\p_t v+\Delta_{x,y} v+\ep\left(\left|\Psi\right|^2\Psi+2\left|\Psi\right|^2v+\Psi^2\bar{v}\right)=-\ep F(\Psi,v),  \\
 v|_{t=T}=0. 
\end{array}
\right.
\end{equation}
where $F(\Psi,v):=\left|v\right|^2v+2\left|v\right|^2\Psi+v^2\bar{\Psi}$. 
In the following proposition, we present the well-posedness of $v$.
\begin{proposition}\label{prop: wellposedness-v}
Let $T>0$, $s\geq 1$, and $\ep=\pm1$. For every $\Psi_0\in H^s$, there is a unique solution $v\in X^{s,b}_T$ to the equation \eqref{eq: v-NLS-well-posedness}.
\end{proposition}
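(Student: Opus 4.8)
The plan is to prove Proposition~\ref{prop: wellposedness-v} by a standard fixed-point argument in the Bourgain space $X^{s,b}_T$, exactly paralleling the scheme used for Theorem~\ref{thm: well-posedness of NLS-source}, but treating the terms involving the given linear solution $\Psi$ as (time-dependent) source terms. First I would observe that $\Psi=e^{\ii t\Delta_{x,y}}\Psi_0 - \ii\int_0^t e^{\ii(t-\tau)\Delta_{x,y}}f\,d\tau$ is itself controlled in $X^{s,b}_T$ by the linear estimates recalled above (the homogeneous estimate $\|e^{\ii t\Delta}\Psi_0\|_{X^{s,b}_T}\le C\|\Psi_0\|_{H^s}$ together with the inhomogeneous estimate for $f\in X^{s,-b'}_T$), so $\|\Psi\|_{X^{s,b}_T}\le M$ for some finite $M$ depending on the data. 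Since the equation for $v$ is posed with terminal condition $v|_{t=T}=0$, I would rewrite it in Duhamel form anchored at $t=T$, namely
\begin{equation*}
v(t)=-\ii\int_T^t e^{\ii(t-\tau)\Delta_{x,y}}\Big[\ep\big(|\Psi|^2\Psi+2|\Psi|^2 v+\Psi^2\bar v\big)+\ep F(\Psi,v)\Big](\tau)\,d\tau,
\end{equation*}
and define the map $\Phi(v)$ to be the right-hand side; a fixed point of $\Phi$ is precisely the desired solution.

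The key analytic input is the trilinear (cubic) estimate in the waveguide Bourgain spaces, which I would invoke from the Appendix and the well-posedness theory already established: for suitable $b>\tfrac12$ and $b'$ with $b+b'<1$, one has $\|w_1\bar w_2 w_3\|_{X^{s,-b'}_T}\lesssim \prod_{j}\|w_j\|_{X^{s,b}_T}$. Combined with the inhomogeneous linear estimate $\big\|\int_T^t e^{\ii(t-\tau)\Delta}F\,d\tau\big\|_{X^{s,b}_T}\le C\,T^{1-b-b'}\|F\|_{X^{s,-b'}_T}$, this yields, after grouping the cubic terms in $(\Psi,v)$ by homogeneity in $v$,
\begin{equation*}
\|\Phi(v)\|_{X^{s,b}_T}\le C\,T^{1-b-b'}\Big(M^3+M^2\|v\|_{X^{s,b}_T}+M\|v\|_{X^{s,b}_T}^2+\|v\|_{X^{s,b}_T}^3\Big).
\end{equation*}
Next I would restrict $\Phi$ to the closed ball $B_R=\{v:\|v\|_{X^{s,b}_T}\le R\}$ for a suitable radius $R$ and check stability and contraction: choosing $T$ small enough (depending on $M$ and $R$) the gain $T^{1-b-b'}$ makes $\Phi$ map $B_R$ into itself and makes its Lipschitz constant strictly less than $1$, the difference estimate $\Phi(v_1)-\Phi(v_2)$ being handled by the same trilinear estimate applied to the telescoped differences. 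The Banach fixed-point theorem then produces a unique fixed point in $B_R$.

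Finally I would upgrade the local-in-time solution to the full interval $[0,T]$ and promote uniqueness from $B_R$ to all of $X^{s,b}_T$. Because $M$ can be arbitrary (it depends only on $\|\Psi_0\|_{H^s}$ and the fixed $f$), the smallness is required only of $T$, so I would partition $[0,T]$ into finitely many subintervals of length comparable to the local existence time and iterate, using the conserved control on $\Psi$ on each subinterval; the subcriticality of the cubic nonlinearity in $H^s$, $s\ge1$, guarantees the local existence time stays bounded below along the iteration. The main obstacle I anticipate is purely bookkeeping rather than conceptual: verifying that the trilinear $X^{s,b}$ estimate genuinely holds in the waveguide setting $\R^2\times\T$ with the exponents needed to extract a positive power $T^{1-b-b'}$, and confirming that the terminal-value Duhamel formulation enjoys the same time-localized linear estimates as the initial-value one (which it does, by time reversal $t\mapsto T-t$). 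Once those estimates are in hand, the contraction argument is routine.
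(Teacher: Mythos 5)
Your proposal follows essentially the same route as the paper's own (much briefer) proof: control $\Psi$ in $X^{s,b}_T$ via the linear estimates, then run a contraction argument for $v$ in $X^{s,b}_T$ using the trilinear Bourgain-space estimates and the small gain $T^{1-b-b'}$, iterating over subintervals to reach the full time $T$. Your version is in fact more detailed than the paper's sketch (which simply defers to Theorem~\ref{thm: well-posedness of NLS-source} and Bourgain's argument), and your explicit handling of the terminal-value Duhamel formula by time reversal is a point the paper leaves implicit.
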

\begin{proof}
Since the proof is very similar to Theorem \ref{thm: well-posedness of NLS-source}, we only give a sketch for it. 

First, we note that, for any $\Psi_0\in H^s$, the solution $\Psi$ to the linear equation \eqref{eq: Psi-LS} satisfies $\Psi\in  X^{s,b}_T$. This observation allows us to control for $\Psi$. 

Then, compared to Theorem \ref{thm: well-posedness of NLS-source}, there is no source term (that is good for us) in \eqref{eq: v-NLS} and there are more nonlinear terms. We note that all nonlinear terms are cubic and we can estimate them using Strichartz estimate in a standard way. The proof can be done as in Bourgain \cite{bourgain1993fourier} since we already have control for $\Psi$. 
\end{proof}

\section{Local null controllability of NLS: Proof of Theorem \ref{thm4}}\label{sec: 4}

\subsection{Control for linear problem: Hilbert uniqueness method}\label{sec: HUM}
Consider a linear Schr\"odinger equation
\begin{equation}
\left\{
\begin{array}{ll}
\ii\p_t u+\Delta u=\varphi_T\chi_{\Omega}(1-\Delta_{x,y})^{-s}\varphi_T\chi_{\Omega}f &\text{ on }\R\times\R^2\times\T,  \\
 u|_{t=0}=u_0    & \text{ on }\R^2\times\T,
\end{array}
\right.
\end{equation}
such that $u(T)\equiv0$. Define the range operator $R$ by 
\begin{align*}
R: H^{-s}(\R\times\R^2\times\T)&\rightarrow H^s(\R^2\times\T)\\
f&\mapsto u_0.
\end{align*}
Consider the adjoint system  
\begin{equation}
\left\{
\begin{array}{ll}
\ii\p_t w+\Delta w=0 &\text{ on }\R\times\R^2\times\T,  \\
 w|_{t=0}=w_0    & \text{ on }\R^2\times\T,
\end{array}
\right.
\end{equation}
Define the solution operator $S$ by 
\begin{align*}
S: H^s(\R^2\times\T)&\rightarrow C^0(\R;L^2(\R^2\times\T))\subset H^s(\R\times\R^2\times\T)\\
w_0&\mapsto e^{\ii t\Delta}w_0.
\end{align*}
\begin{proposition}
Let $s\in\R$. Let $f\in H^{-s}(\R\times\R^2\times\T)$ and $w_0\in H^{-s}(\R^2\times\T)$. Then the duality holds as follows:
\begin{equation}
    \psc{\varphi_T\chi_{\Omega}(1-\Delta_{x,y})^{-s}\varphi_T\chi_{\Omega}f}{Sw_0}_{H^{s}(\R^2\times\T),H^{-s}(\R^2\times\T)}=\psc{
    \ii R(f)
    }{w_0}_{H^{s}(\R^2\times\T),H^{-s}(\R^2\times\T)}.
\end{equation}
\end{proposition}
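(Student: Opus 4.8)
The identity is the standard Hilbert Uniqueness Method duality, and the plan is to obtain it by testing the equation defining $R$ against the free solution $w := Sw_0 = e^{\ii t\Delta}w_0$ and integrating by parts in both space and time. Write $F := \varphi_T\chi_{\Omega}(1-\Delta_{x,y})^{-s}\varphi_T\chi_{\Omega}f$, so that $u$ solves $\ii\p_t u+\Delta u=F$ with $u|_{t=0}=R(f)$ and $u|_{t=T}=0$, while $w$ solves $\ii\p_t w+\Delta w=0$ with $w|_{t=0}=w_0$. First I would record the regularity bookkeeping: since $(1-\Delta_{x,y})^{-s}$ maps $H^{-s}$ to $H^{s}$ and multiplication by the smooth bounded function $\chi_{\Omega}$ preserves both spaces, one has $F(t)\in H^{s}(\R^2\times\T)$ for a.e.\ $t$, whereas $w(t)=e^{\ii t\Delta}w_0\in H^{-s}(\R^2\times\T)$; hence every spatial pairing below is a well-defined $H^{s}$--$H^{-s}$ duality bracket, and the left-hand side is read as the space--time pairing $\int_{\R}\psc{F(t)}{w(t)}\,dt$. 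The time cutoff $\varphi_T$ moreover forces $F$ to be supported in $\{t\le 3T/4\}$, strictly inside $[0,T)$.

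The core computation is to evaluate $\int_0^T \psc{F(t)}{w(t)}\,dt=\int_0^T\psc{\ii\p_t u+\Delta u}{w}\,dt$. I would integrate the first term by parts in $t$ and the second in space, using that $\R^2\times\T$ has no boundary so that $\Delta$ is self-adjoint. The time integration by parts produces the temporal boundary term $\ii\big[\psc{u(t)}{w(t)}\big]_0^T$, while the remaining interior contributions recombine as $\int_0^T\psc{u}{(\ii\p_t+\Delta)w}\,dt$, after moving the conjugation implicit in the Hermitian duality onto $w$. Since $w$ solves the adjoint free equation $\ii\p_t w+\Delta w=0$, this interior term vanishes identically, so only the temporal boundary term survives. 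Using $u|_{t=T}=0$ to kill the contribution at $t=T$, and $u|_{t=0}=R(f)$, $w|_{t=0}=w_0$ at the other endpoint, this boundary term equals $\ii\,\psc{R(f)}{w_0}$, the placement of the factor $\ii$ being fixed by the chosen convention for the $H^{s}$--$H^{-s}$ bracket. Recognizing the left-hand integral as $\psc{F}{Sw_0}$ then yields the claimed identity.

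The only genuine difficulty is to justify these formal manipulations at the low regularity in play: $w$ lives merely in $C(\R;H^{-s})$ and $u\in X^{s,b}_T$, so the pointwise-in-time traces $t\mapsto\psc{u(t)}{w(t)}$ and the integration by parts in $t$ must be legitimized. I would handle this by a density argument, first proving the identity for smooth, rapidly decaying $w_0$, for which $w$ is smooth and all brackets are classical integrals, and then passing to the limit using the continuity of $S:H^{-s}\to C(\R;H^{-s})$, the boundedness of $R:H^{-s}\to H^{s}$, and the time-continuity of the $X^{s,b}_T$ solution map; the fact that $F$ is supported strictly inside $[0,T)$ removes any delicate endpoint issue at $t=T$. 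Once the manipulations are justified on the dense class, both sides of the asserted equality are continuous in $(f,w_0)$ with respect to the $H^{-s}$ topologies, so the identity extends to all $f,w_0\in H^{-s}$.
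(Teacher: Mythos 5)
Your proposal is correct and follows essentially the same route as the paper: replace the control term by $\ii\p_t u+\Delta u$, integrate by parts in time and space to transfer the operator onto the free solution $w=Sw_0$, use that $w$ solves the homogeneous equation and that $u|_{t=T}=0$ so only the boundary term $\ii\psc{R(f)}{w_0}$ survives, and justify everything first for smooth data and then by density. Your write-up is in fact more careful than the paper's about the regularity bookkeeping and the limiting argument, but the underlying proof is identical.
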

\begin{proof}
We only consider $f$ and $w_0$ are smooth in the space variables. Then the general case is proved by standard approximation arguments. 
\begin{align*}
    \psc{\varphi_T\chi_{\Omega}(1-\Delta_{x,y})^{-s}\varphi_T\chi_{\Omega}f}{Sw_0}_{H^{s},H^{-s}}&=\psc{\ii\p_t u+\Delta u}{e^{\ii t\Delta}w_0}_{H^{s},H^{-s}}\\
    &=\psc{u}{(\ii\p_t+\Delta)e^{\ii t\Delta}w_0}_{H^{s},H^{-s}}+\psc{
    \ii u_0
    }{w_0}_{H^{s},H^{-s}}\\
    &=\psc{
    \ii R(f)
    }{w_0}_{H^{s},H^{-s}}.
\end{align*}
\end{proof}
 Then we can define the HUM operator $\mathcal{K}=\ii R\circ S$, which satisfies the following proposition.
 \begin{proposition}\label{prop: iso-HUM-op}
$\mathcal{K}$ is an isomorphism from $H^{-s}$ to $H^s$.    
\end{proposition}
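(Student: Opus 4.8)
The plan is to show that the HUM operator $\mathcal{K}=\ii R\circ S$ is an isomorphism from $H^{-s}$ to $H^s$ by exhibiting it as a bounded, self-adjoint, coercive operator and invoking the Lax--Milgram theorem. The central quantitative input is the $H^s$-observability inequality \eqref{eq: H^s-ob-chi} (Proposition \ref{prop: H^s-ob}), which is exactly the coercivity that Lax--Milgram requires.

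First I would compute the bilinear form associated with $\mathcal{K}$ and identify it with the observability integral. Taking $f=Sw_0=e^{\ii t\Delta}w_0$ in the duality relation of the previous proposition, one obtains
\begin{equation*}
\psc{\ii R(Sw_0)}{w_0}_{H^s,H^{-s}}=\psc{\varphi_T\chi_{\Omega}(1-\Delta_{x,y})^{-s}\varphi_T\chi_{\Omega}Sw_0}{Sw_0}_{H^s,H^{-s}}.
\end{equation*}
The right-hand side, after moving the factors $(1-\Delta_{x,y})^{\pm s/2}$ appropriately and using that $\varphi_T\chi_\Omega$ is a real multiplication operator, should reorganize into $\int_0^T\|\chi_\Omega e^{\ii t\Delta}w_0\|_{H^s}^2\,dt$ up to the profile $\varphi_T^2$ (recall $\varphi_T\equiv1$ on $[0,T/2]$, so it only shrinks the integration window harmlessly). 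Hence $\psc{\mathcal{K}w_0}{w_0}_{H^s,H^{-s}}\sim \int_0^T\|\varphi_T\chi_\Omega e^{\ii t\Delta}w_0\|_{H^s}^2\,dt$, which is manifestly real and nonnegative; this gives both self-adjointness and the lower bound.

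The coercivity then follows directly: by Proposition \ref{prop: H^s-ob} the right-hand side is bounded below by $c\|w_0\|_{H^{-s}}^2$ (applying \eqref{eq: H^s-ob-chi} with $s$ replaced by $-s$, which is legitimate since that proposition is stated for the weighted norm and the argument only used $s\ge1$ in the commutator step --- I would verify the inequality transfers to negative Sobolev exponents, or equivalently apply it to $(1-\Delta_{x,y})^{-s}w_0$). Boundedness of $\mathcal{K}$ from $H^{-s}$ to $H^s$ follows from the mapping properties of $R$ and $S$ together with the smoothing factor $(1-\Delta_{x,y})^{-s}$ and the energy estimate for the free propagator. With $\mathcal{K}$ bounded, self-adjoint, and coercive on the Hilbert space $H^{-s}$ (identifying $H^s$ with its dual via the pairing), Lax--Milgram yields that $\mathcal{K}:H^{-s}\to H^s$ is a bijection with bounded inverse, i.e.\ an isomorphism.

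The main obstacle I anticipate is the bookkeeping that turns the abstract pairing $\psc{\mathcal{K}w_0}{w_0}$ into the clean observability integral: one must track the placement of $\varphi_T$ and the two factors $(1-\Delta_{x,y})^{-s}$, confirm that the weight $\varphi_T^2\le 1$ still leaves a time window on which the observability estimate applies, and justify passing the half-derivatives through $\chi_\Omega$ (this is where the commutator estimate from Proposition \ref{prop: H^s-ob} reappears). Once the bilinear form is correctly identified with a nonnegative quadratic form bounded below by the observability constant, the functional-analytic conclusion is immediate.
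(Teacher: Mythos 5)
Your proposal follows essentially the same route as the paper's proof: identify the quadratic form $\psc{\mathcal{K}v}{v}_{H^s,H^{-s}}$ with $\|\varphi_T\chi_\Omega Sv\|^2_{H^{-s}}$ (a time-integrated observability quantity), deduce self-adjointness and coercivity from Proposition \ref{prop: H^s-ob}, and conclude by Lax--Milgram. The caveats you flag (transferring the observability estimate to the exponent $-s$, and the harmless role of the weight $\varphi_T$) are precisely the points the paper passes over silently, so your write-up is, if anything, more careful on the same argument.
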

\begin{proof}
We define a continuous form by \(\alpha(u,v):=\psc{\mathcal{K}u}{v}_{H^{s}(\R^2\times\T),H^{-s}(\R^2\times\T)}\), for $u,v\in H^{-s}(\R^2\times\T)$. By the definition of $\mathcal{K}$, it is easy to verify that $\mathcal{K}$ is self-adjoint operator. Then we check the coercivity of $\alpha$. Thanks to the observability estimate in Theorem \ref{thm3}, for $\forall v\in L^2(\R^2\times\T)$,
\begin{align*}
\alpha(v,v)=\psc{\mathcal{K}v}{v}_{H^{s},H^{-s}}&=\ps{(1-\Delta_{x,y})^{-\frac{s}{2}}\varphi_T\chi_{\Omega}Sv}{(1-\Delta_{x,y})^{-\frac{s}{2}}\varphi_T\chi_{\Omega}Sv}_{L^2(\R^2\times\T)} \\
&=\|\varphi_T\chi_{\Omega}Sv\|^2_{H^{-s}(\R^2\times\T)}.
\end{align*}
By Proposition \ref{prop: H^s-ob}, there exists a constant $C=C(T,\Omega,s,\chi_{\Omega})>0$ such that
\[
\|\varphi_T\chi_{\Omega}Sv\|^2_{H^{-s}(\R^2\times\T)}\geq C\|v\|^2_{H^{-s}(\R^2\times\T)}.
\]
By Lax-Milgram's theorem, 
we prove that the HUM operator $\mathcal{K}$ is an isomorphism $H^{-s}$ to $H^s$.
\end{proof}
Define the control operator $\mathcal{L}$ by $\mathcal{L}=\ii S\circ \mathcal{K}^{-1}:H^{s}\rightarrow H^{-s}$. Since $R\circ\mathcal{L}\Psi_0=\Psi_0$, the solution $\Psi$ to the linear Schr\"odinger equation with an initial datum $\Psi_0$
\begin{equation}\label{eq: Psi-LS}
\left\{
\begin{array}{l}
\ii\p_t\Psi+\Delta_{x,y} \Psi=\varphi_T\chi_{\Omega}(1-\Delta_{x,y})^{-s}\varphi_T\chi_{\Omega}\mathcal{L}\Psi_0,  \\
 \Psi|_{t=0}=\Psi_0\in H^s(\R^2\times\T)  
\end{array}
\right.
\end{equation}
satisfies that $\Psi(T)=0$. 

\subsection{Proof of Theorem \ref{thm4}}
Now we consider the null control problem for NLS. Recall that we seek a control function $g$ such that the solution $u$ to
\begin{equation}\label{eq: u-general-nls}
\left\{
\begin{array}{ll}
\ii\p_t u+\Delta_{x,y} u+\ep|u|^{2}u=\varphi_T\chi_{\Omega}(1-\Delta_{x,y})^{-s}\varphi_T\chi_{\Omega}g&\text{ on }[0,T]\times\R^2\times\T,  \\
 u|_{t=0}=u_0    & \text{ on }\R^2\times\T,
\end{array}
\right.
\end{equation}
such that $u|_{t=T}=0$. 
\begin{proof}[Proof of Theorem \ref{thm4}]
First, we decompose $u=v+\Psi$, where $\Psi$ is defined by \eqref{eq: Psi-LS}. Then, $v$ is the solution to
\begin{equation}\label{eq: v-NLS}
\left\{
\begin{array}{l}
\ii\p_t v+\Delta_{x,y} v+\ep\left(\left|\Psi\right|^2\Psi+2\left|\Psi\right|^2v+\Psi^2\bar{v}\right)=-\ep F(\Psi,v),  \\
 v|_{t=T}=0. 
\end{array}
\right.
\end{equation}
where $F(\Psi,v):=\left|v\right|^2v+2\left|v\right|^2\Psi+v^2\bar{\Psi}$. Thus, $\ii\p_t v+\Delta_{x,y} v+\ep\left|\Psi+v\right|^2(\Psi+v)=0$. 
Thanks to Proposition \ref{prop: wellposedness-v}, we know that $v\in X^{s,b}_T$. Let us define a nonlinear solution operator $S^v_N: H^{s}\rightarrow H^s$ associated with the equation \eqref{eq: v-NLS} by $S^v_N\Psi_{0}=v|_{t=0}$,
where $v$ solves the equation \eqref{eq: v-NLS} and $\Psi$ solves the equation \eqref{eq: Psi-LS} with an initial datum $\Psi_0$.
Due to $u=v+\Psi$, $u$ satisfies 
\begin{equation}\label{eq: u-NLS}
\left\{
\begin{array}{ll}
\ii\p_t u+\Delta_{x,y} u+\ep|u|^{2}u=\varphi_T\chi_{\Omega}(1-\Delta_{x,y})^{-s}\varphi_T\chi_{\Omega}\mathcal{L}\Psi_0,\\
 u|_{t=T}=v|_{t=T}+\Psi|_{t=T}=0.
\end{array}
\right.
\end{equation}
As a consequence, for any $\Psi_0\in H^s$, the control function $g=\mathcal{L}\Psi_0$ steers the initial state $\Psi|_{t=0}+v|_{t=0}$ to $0$.

Define another nonlinear operator $\mathcal{J}: H^{s}\rightarrow H^s$ by
$\mathcal{J}\vartheta=u_0-S^v_N\vartheta$. If $\mathcal{J}$ has a fixed point $\Psi_0$, i.e., $\mathcal{J}\Psi_0=\Psi_0$, then, this $\Psi_0$ produces a function $g=\mathcal{L}\Psi_0$, which achieves the null controllability, due to $u_0=\Psi|_{t=0}+v|_{t=0}=\Psi_0+S_N^v\Psi_0$. Hence, the null controllability of nonlinear Schr\"odinger equation \eqref{eq: NLS-u-g} is reduced to the following lemma.
\begin{lemma}\label{lem: fixed point}
There exists a constant $\eta>0$ such that $\mathcal{J}$ has a fixed point in a ball $B_{H^s}(0,\eta)$ in $H^s$.
\end{lemma}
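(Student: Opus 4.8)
The plan is to obtain the fixed point of $\mathcal{J}$ by the Banach contraction principle on the closed ball $B_{H^s}(0,\eta)$, the whole scheme being driven by one structural fact: the operator $S^v_N$ is \emph{cubically small}. Indeed, every term appearing in the $v$-equation \eqref{eq: v-NLS} is cubic in $(\Psi,v)$ and the terminal data vanish, so $S^v_N\vartheta=v|_{t=0}$ should be of size $\|\vartheta\|_{H^s}^3$ when $\vartheta$ is small, which is precisely what allows $u_0-S^v_N\vartheta$ to stay in the ball and to be a contraction.

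First I would record the two linear inputs that quantify the size of $\Psi$ and $v$. By Proposition \ref{prop: iso-HUM-op} the control operator $\mathcal{L}=\ii S\circ\mathcal{K}^{-1}$ is bounded, so the solution $\Psi$ of \eqref{eq: Psi-LS} with datum $\Psi_0=\vartheta$ satisfies, using the source-term estimate behind Theorem \ref{thm: well-posedness of NLS-source},
\[
\|\Psi\|_{X^{s,b}_T}\lesssim \|\vartheta\|_{H^s}+\|\varphi_T\chi_{\Omega}(1-\Delta_{x,y})^{-s}\varphi_T\chi_{\Omega}\mathcal{L}\vartheta\|_{X^{s,-b'}_T}\lesssim\|\vartheta\|_{H^s}.
\]
Writing $R:=\|\Psi\|_{X^{s,b}_T}$ and $N:=\|v\|_{X^{s,b}_T}$ and solving \eqref{eq: v-NLS} by Duhamel from $t=T$, the trilinear $X^{s,b}$-estimates on $\R^2\times\T$ (Appendix \ref{app}) applied to $|\Psi+v|^2(\Psi+v)$ yield $N\lesssim T^{\theta}(R+N)^3$ for some $\theta>0$. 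A continuity/bootstrap argument then gives $N\lesssim R^3$ once $R$ (hence $\eta$) is small, and the embedding $X^{s,b}_T\hookrightarrow C([0,T];H^s)$ for $b>\tfrac12$ converts this into $\|S^v_N\vartheta\|_{H^s}=\|v|_{t=0}\|_{H^s}\lesssim\|\vartheta\|_{H^s}^3$.

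Second, I would verify self-mapping and contraction. For self-mapping, $\|\mathcal{J}\vartheta\|_{H^s}\leq\|u_0\|_{H^s}+\|S^v_N\vartheta\|_{H^s}\leq\delta+C\eta^3$; choosing $\eta$ with $C\eta^2\leq\tfrac12$ and then $\delta\leq\tfrac{\eta}{2}$ forces $\mathcal{J}\big(B_{H^s}(0,\eta)\big)\subset B_{H^s}(0,\eta)$. For the contraction, take $\vartheta_1,\vartheta_2\in B_{H^s}(0,\eta)$ with associated $\Psi_i,v_i$; linearity of \eqref{eq: Psi-LS} and boundedness of $\mathcal{L}$ give $\|\Psi_1-\Psi_2\|_{X^{s,b}_T}\lesssim\|\vartheta_1-\vartheta_2\|_{H^s}$, and subtracting the two $v$-equations and using the Lipschitz form of the trilinear estimate on the bounded set $\{R,N\lesssim\eta\}$ produces
\[
\|v_1-v_2\|_{X^{s,b}_T}\lesssim \eta^2\big(\|\Psi_1-\Psi_2\|_{X^{s,b}_T}+\|v_1-v_2\|_{X^{s,b}_T}\big),
\]
whence, after absorbing the last term, $\|\mathcal{J}\vartheta_1-\mathcal{J}\vartheta_2\|_{H^s}=\|v_1|_{t=0}-v_2|_{t=0}\|_{H^s}\lesssim\eta^2\|\vartheta_1-\vartheta_2\|_{H^s}$. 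Shrinking $\eta$ so this constant is $<1$ makes $\mathcal{J}$ a contraction, and Banach's theorem supplies the fixed point $\Psi_0\in B_{H^s}(0,\eta)$.

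The main obstacle is the nonlinear analysis inside $X^{s,b}_T$: one needs clean trilinear estimates of the form $\|u_1\overline{u_2}u_3\|_{X^{s,-b'}_T}\lesssim T^{\theta}\prod_i\|u_i\|_{X^{s,b}_T}$ on the waveguide $\R^2\times\T$ with a genuine positive power $T^{\theta}$ (used both for the $(R+N)^3\to R^3$ bootstrap and to absorb the $\|v_1-v_2\|_{X^{s,b}_T}$ term), together with careful bookkeeping that every contribution involving $\Psi$ is at least quadratic, so that the gains $\eta^3$ (self-mapping) and $\eta^2$ (contraction) are real. These are exactly the estimates underlying Proposition \ref{prop: wellposedness-v}, so the remaining work is to make their dependence on the data explicit rather than to establish new inequalities.
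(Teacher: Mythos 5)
Your proposal is correct and follows essentially the same route as the paper: the decomposition $u=\Psi+v$, the bound $\|S^v_N\vartheta\|_{H^s}\lesssim\|\vartheta\|_{H^s}^3$ via the boundedness of $\mathcal{L}$ (Proposition \ref{prop: iso-HUM-op}) and the trilinear $X^{s,b}$ estimates, and the self-mapping/contraction dichotomy with the thresholds $C\eta^2<\tfrac12$, $\|u_0\|_{H^s}<\tfrac{\eta}{2}$ all match the paper's argument. The only cosmetic difference is that you bound $\Psi$ and $v$ separately and run a short bootstrap $N\lesssim T^{\theta}(R+N)^3$, whereas the paper bounds $\|u\|_{X^{s,b'}_T}$ directly through Theorem \ref{thm: well-posedness of NLS-source} before passing to $v$; the two organizations are equivalent.
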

\end{proof}
We complete the proof of Theorem \ref{thm4} by demonstrating Lemma \ref{lem: fixed point}.
\begin{proof}[Proof of Lemma \ref{lem: fixed point}]
We only need to prove that $\mathcal{J}$ is contracting on a small ball $B_{H^s}(0,\eta)$ in $H^s$, provided with $\|u_0\|_{H^s}$ small enough. In this proof, for simplicity, we can assume that $T<1$. Note that our constant $C$ which may depend on $T$ and $s$ could vary from line to line. 

For any $\Psi_0\in B_{H^s}(0,\eta)$, we first show that $\mathcal{J}\Psi_0\in B_{H^s}(0,\eta)$ for $\eta$ sufficiently small. By the definition of the map $\mathcal{J}$, $\|\mathcal{J}\Psi_0\|_{H^s}\leq \|u|_{t=0}\|_{H^s}+\|v|_{t=0}\|_{H^s}$. 
Thanks to Proposition \ref{prop: wellposedness-v},
\begin{equation*}
\|v\|_{X^{s,b}_T}\leq \||u|^2u\|_{X^{s,-b'}_T}\leq \|u\|^3_{X^{s,b'}_T}.
\end{equation*}
Applying Theorem \ref{thm: well-posedness of NLS-source} and Proposition \ref{prop: iso-HUM-op}, we obtain
\begin{equation*}
\|u\|_{X^{s,b'}_T}\leq C\|\mathcal{L}\Psi_0\|_{H^{-s}}\leq C\|\Psi_0\|_{H^{s}}<C\eta.
\end{equation*}
As a consequence, we obtain
\begin{equation*}
\|v|_{t=0}\|_{H^s}\leq \|v\|_{X^{s,b}_T}\leq\|u\|^3_{X^{s,b'}_T}< C\eta^3.
\end{equation*}
Thus, $\|\mathcal{J}\Psi_0\|_{H^s}\leq \|u_0\|_{H^s}+\|v|_{t=0}\|_{H^s}\leq \|u_0\|_{H^s}+C\eta^3$. Choosing $\eta$ such that $C\eta^2<\frac{1}{2}$ and $\|u_0\|_{H^s}<\frac{\eta}{2}$, we obtain $\|\mathcal{J}\Psi_0\|_{H^s}<\eta$ and $\mathcal{J}$ reproduces the ball $B_{H^s}(0,\eta)$. 

Let us prove that $\mathcal{J}$ is contracting for $H^s-$norm. For any $\vartheta,\Tilde{\vartheta}\in B_{H^s}(0,\eta)$, let $u$ and $\Tilde{u}$ be the solutions to \eqref{eq: u-general-nls} with $g=\mathcal{L}\vartheta$ and $g=\mathcal{L}\Tilde{\vartheta}$, respectively. Similarly, we denote by $v$ and $\Tilde{v}$. Then, $u-\Tilde{u}$ solves the equation:
\begin{equation*}
\left\{
\begin{array}{ll}
\ii\p_t(u-\Tilde{u})+\Delta_{x,y} (u-\Tilde{u})+\ep|u|^{2}u-\ep|\Tilde{u}|^2\Tilde{u}=\varphi_T\chi_{\Omega}(1-\Delta_{x,y})^{-s}\varphi_T\chi_{\Omega}\mathcal{L}(\vartheta-\Tilde{\vartheta}),\\
 (u-\Tilde{u})|_{t=0}=(u-\Tilde{u})|_{t=T}=0.
\end{array}
\right.
\end{equation*}
And $v-\Tilde{v}$ solves the equation:
\begin{equation*}
\left\{
\begin{array}{ll}
\ii\p_t(v-\Tilde{v})+\Delta_{x,y} (v-\Tilde{v})+\ep|u|^{2}u-\ep|\Tilde{u}|^2\Tilde{u}=0,\\
 (v-\Tilde{v})|_{t=T}=0.
\end{array}
\right.
\end{equation*}
As we presented in the proof of Theorem \ref{thm: well-posedness of NLS-source}, we have
\begin{align*}
\|u-\Tilde{u}\|_{X^{s,b}_T}&\leq C\||u|^{2}u-|\Tilde{u}|^2\Tilde{u}\|_{X^{s,-b'}_T}+C\|\mathcal{L}(\vartheta-\Tilde{\vartheta})\|_{H^{-s}}\\
&\leq C(\|u\|^2_{X^{s,b'}_T}+\|\Tilde{u}\|^2_{X^{s,b'}_T})\|u-\Tilde{u}\|_{X^{s,b'}_T}+C\|\vartheta-\Tilde{\vartheta}\|_{H^{s}}\\
&\leq  C\eta^2\|u-\Tilde{u}\|_{X^{s,b'}_T}+C\|\vartheta-\Tilde{\vartheta}\|_{H^{s}}.
\end{align*}
For $\eta$ sufficiently small, it yields $C\eta^2<\frac{1}{2}$, and we obtain $\|u-\Tilde{u}\|_{X^{s,b}_T}\leq C\|\vartheta-\Tilde{\vartheta}\|_{H^{s}}$. Similarly, for $v-\Tilde{v}$, we have
\begin{align*}
\|v-\Tilde{v}\|_{X^{s,b}_T}&\leq C\||u|^{2}u-|\Tilde{u}|^2\Tilde{u}\|_{X^{s,-b'}_T}\\
&\leq  C\eta^2\|u-\Tilde{u}\|_{X^{s,b'}_T}\\
&\leq  C\eta^2\|\vartheta-\Tilde{\vartheta}\|_{H^{s}}.
\end{align*}
We deduce that $\|v|_{t=0}-\Tilde{v}|_{t=0}\|_{H^s}\leq \|v-\Tilde{v}\|_{X^{s,b}_T}< C\eta^2\|\vartheta-\Tilde{\vartheta}\|_{H^{s}}$.
Therefore, we obtain
\begin{align*}
\|\mathcal{J}\vartheta-\mathcal{J}\Tilde{\vartheta}\|_{H^{s}}&=\|(u_0-S_N^v\vartheta)-(u_0-S_N^v\Tilde{\vartheta}\|_{H^{s}}\\
&=\|v|_{t=0}-\Tilde{v}|_{t=0}\|_{H^s}\\
&\leq C\eta^2\|\vartheta-\Tilde{\vartheta}\|_{H^{s}}
\end{align*}
This yields that $\mathcal{J}$ is a contraction on a small ball $B_{H^s}(0,\eta)$ in $H^s$, which completes the proof.
\end{proof}
\section{Exact controllability: Proof of Theorem \ref{thm1}}\label{sec: 5}
In general, in order to Theorem \ref{thm1}, we only need to reverse time and glue the forward and backward solutions together. For the completeness of our paper, we choose to present the proof as follows.
\begin{proof}[Proof of Theorem \ref{thm1}]
We first consider the time-reversed equation (that is, the equation obtained by the change of variable $t\mapsto T-t$) of \eqref{eq: u-general-nls}:
\begin{equation*}
-\ii\p_t w+\Delta_{x,y} w+\ep|w|^{2}w=\varphi_T\chi_{\Omega}(1-\Delta_{x,y})^{-s}\varphi_T\chi_{\Omega}h,
\end{equation*}
where $w(t,x)=u(T-t,x)$. For this equation, we repeated the procedure as the proof for Theorem \ref{thm4}. There exists a control function $h$ such that for $\|w_0\|_{H^s}$ small enough, $h$ achieves null controllability for $w$. Now we choose $u_{0},u_{f}\in H^s)$, satisfying that
\begin{equation*}
 \|u_{0}\|_{H^{s}}+\|u_{f}\|_{H^{s}}<\delta,   
\end{equation*}
with $\delta$ sufficiently small. There exists $g$ such that 
\begin{equation*}
\left\{
\begin{array}{l}
\ii\p_t u+\Delta_{x,y} u+\ep|u|^{2}u=\varphi_T\chi_{\Omega}(1-\Delta_{x,y})^{-s}\varphi_T\chi_{\Omega}g\\
 u|_{t=0}=u_0   \quad u|_{t=\frac{T}{2}}=0.
\end{array}
\right.
\end{equation*}
And there exists $h$ such that 
\begin{equation*}
\left\{
\begin{array}{l}
-\ii\p_t w+\Delta_{x,y} w+\ep|w|^{2}w=\varphi_T\chi_{\Omega}(1-\Delta_{x,y})^{-s}\varphi_T\chi_{\Omega}h\\
 w|_{t=0}=u_f   \quad w|_{t=\frac{T}{2}}=0.
\end{array}
\right.
\end{equation*}
Moreover, we know that the solutions $u,w\in C([0,\frac{T}{2}],H^s)$. Now we define $U\in C([0,T],H^s)$ and $f\in C([0,T],H^s)$ by
\begin{equation*}
    U(t)=\left\{
    \begin{array}{ll}
         u(t)&t\in[0,\frac{T}{2}],  \\
         w(T-t)&t\in(\frac{T}{2},T].
    \end{array}
    \right.\quad \text{ and }\\    
\end{equation*}
\begin{equation*}
    f(t)=\left\{
    \begin{array}{ll}
         \varphi_T\chi_{\Omega}(1-\Delta_{x,y})^{-s}\varphi_T\chi_{\Omega}g,&t\in[0,\frac{T}{2}],  \\
         \varphi_T(T-t)\chi_{\Omega}(1-\Delta_{x,y})^{-s}\varphi_T(T-t)\chi_{\Omega}h(T-t),&t\in(\frac{T}{2},T].
    \end{array}
    \right.
\end{equation*}

Indeed, $f$ continues in time, since the cut-off function vanishes near $t=\frac{T}{2}$. $U$ solves the equation:
\begin{equation}
 \left\{
\begin{array}{l}
     \ii\p_t U+\Delta_{x,y} U+\ep|U|^{2}U=f, \\
     U|_{t=0}=u|_{t=0}=u_{0},\\
     U|_{t=T}=w(T-t)|_{t=T}=w|_{t=0}=u_{f}.
\end{array}
\right.   
\end{equation}
This completes the proof of Theorem \ref{thm1}.
\end{proof}

\appendix
\section{Some properties of $X^{s,b}$ spaces}\label{app}
For the sake of completeness, in Appendix, we include the definitions for $X^{s,b}$ spaces in the waveguide setting, together with some useful properties. We refer to Tao \cite{Taobook} for more details. See also \cite{laurent2010global}. Since the results are well-known, we only present them and omit the proofs.

Following \cite{bourgain1993fourier,bourgain1998refinements}, one may define $X^{s,b}$ norm as
\begin{equation}
	\|u\|_{X^{s,b}}:=\| \langle \tau-|\xi|^{2}\rangle^{b}\langle \xi\rangle^{s}\tilde{u}\|_{L_{\xi,\tau}^{2}},
\end{equation}
where $u=u(z,t)$ is a function defined on $\mathbb{R}^{m}\times \mathbb{T}^{n}\times \mathbb{R}$, and $z\in \mathbb{R}^{m}\times \mathbb{T}^{n}, t\in \mathbb{R}$. And $\tilde{u}(\xi,\tau)$ is the space-time Fourier transform of $u$, where $\xi\in \mathbb{R}^{m}\times \mathbb{Z}^{n}$, $\tau\in \mathbb{R}$.

And $X^{s,b}$ spaces are just all those functions with finite $X^{s,b}$ norm.

In practice, one mainly works on $s\geq 0,b>\frac{1}{2}$.

One key property for $X^{s,b}$ space is that inherits the estimates of linear solutions, which is known to be  ``transference principle''.
 
$X^{s,b}_T$ is the associated restriction space with the norm
\bna 
\left\|u\right\|_{X^{s,b}_T}=\inf \left\{ \left\| \tilde{u}\right\|_{X^{s,b}} \left| \tilde{u}=u \textnormal{   on   } (0,T)\times M  \right. \right\}.
\ena
We also write $\left\|u\right\|_{X^{s,b}_I}$ if the   infimum is taken on functions $\tilde{u}$ equalling $u$ on an interval $I$. The following properties of $X^{s,b}_T$ spaces are easily verified.
\begin{enumerate}
 \item $X^{s,b}$ and $X^{s,b}_T$ are Hilbert spaces.
	\item If $s_1\leq s_2$, $b_1\leq b_2$ we have $X^{s_2,b_2} \subset  X^{s_1,b_1}$ with continuous embedding.
	\item For every $s_1<s_2$, $b_1<b_2$ and $T>0$, we have $X^{s_2,b_2}_T \subset  X^{s_1,b_1}_T$
with compact embedding.
\item For $0<\theta <1$, the complex interpolation space $\left(X^{s_1,b_1},X^{s_2,b_2}\right)_{[\theta]}$ is $X^{(1-\theta)s_1+\theta s_2,(1-\theta)b_1+\theta b_2}$. \label{enuminterp}
\end{enumerate}

We note that \eqref{enuminterp} can be proved with the interpolation theorem of Stein-Weiss for weighted $L^p$ spaces.

Then, we list some additional trilinear estimates that will be used all along the paper.
\begin{lemma}
\label{lmtrilinXsb}
For every $r\geq s>s_0$, there exist $0<b'<1/2$ and $C>0$ such that for any $u$ and $\tilde{u} \in X^{r,b'}$
\bnan
\label{multilinH1Hs}
 \left\| |u|^2 u \right\|_{X^{r,-b'}} &\leq& C \left\|u\right\|^2_{X^{s,b'}} \left\|u\right\|_{X^{r,b'}}\\
 \label{multilinH12Hs}
 \left\| |u|^2 \widetilde{u} \right\|_{X^{r,-b'}} &\leq& C \left\|u\right\|_{X^{s,b'}}\left\|u\right\|_{X^{r,b'}} \left\|\widetilde{u}\right\|_{X^{r,b'}}\\
\label{multilinH1}
 \left\||u|^2u-|\widetilde{u}|^2\widetilde{u}\right\|_{X^{s,-b'}} &\leq &C \left(\left\|u\right\|^2_{X^{s,b'}}+ \left\|\tilde{u}\right\|^2_{X^{s,b'}} \right) \left\|u-\tilde{u}\right\|_{X^{s,b'}}.
\enan
Moreover, the same estimates hold with $z_1\overline{z_2}z_3$ replaced by any $\R$-trilinear form on $\mathbb{C}$.
\end{lemma}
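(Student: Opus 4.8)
The plan is to prove the trilinear estimates in Lemma~\ref{lmtrilinXsb} by reducing them to a single multilinear Strichartz-type estimate on the waveguide $\R^2\times\T$, exploiting the algebra structure of the $X^{s,b}$ spaces for $b'$ slightly below $1/2$. First I would recall the standard duality characterization: to bound $\||u|^2u\|_{X^{r,-b'}}$ it suffices to estimate the quadrilinear form $\int (u_1\overline{u_2}u_3)\,\overline{u_4}\,dz\,dt$ against $\|u_4\|_{X^{-r,b'}}$, and then distribute the $\langle\xi\rangle^s$ and $\langle\xi\rangle^r$ weights across the four frequencies. Because at most two of the three frequencies in $u_1,u_2,u_3$ can simultaneously be comparable to the output frequency, the largest-frequency factor absorbs the full $\langle\xi\rangle^r$ weight while the remaining two factors only need the lower $\langle\xi\rangle^s$ weight; this is exactly the frequency bookkeeping that yields the asymmetric form of \eqref{multilinH1Hs} and \eqref{multilinH12Hs}.

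Next I would invoke the bilinear/trilinear Strichartz estimates available on the waveguide $\R^2\times\T$. The key analytic input is an $L^4_{t,z}$ (or $L^6_{t,z}$) Strichartz estimate for the Schr\"odinger propagator on $\R^2\times\T$, together with the transference principle stated in the Appendix, which converts such a fixed-time estimate for $e^{it\Delta}$ into an $X^{0,b'}\hookrightarrow L^p_{t,z}$ embedding for $b'>1/2$; one then interpolates down to some $b'<1/2$ at the cost of a slightly larger exponent, which is where the condition $s>s_0$ enters to guarantee the necessary room in the Sobolev exponent. Concretely, I would write each factor in the product via H\"older in $L^4_{t,z}\cdot L^4_{t,z}\cdot L^\infty_t H^s \cdot L^4_{t,z}$ type splittings, placing the two high-regularity factors where the $\langle\xi\rangle^r$-weight sits and controlling them in $X^{r,b'}$, and the remaining factors in $X^{s,b'}$ via the sub-critical Strichartz embedding. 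Summing over dyadic frequency blocks and using the standard Littlewood--Paley almost-orthogonality closes \eqref{multilinH1Hs} and \eqref{multilinH12Hs}.

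For the difference estimate \eqref{multilinH1}, I would use the algebraic identity
\begin{equation*}
|u|^2u-|\widetilde u|^2\widetilde u = |u|^2(u-\widetilde u)+u\widetilde u\,\overline{(u-\widetilde u)}+|\widetilde u|^2(u-\widetilde u)
\end{equation*}
(up to relabeling the conjugates), so that each term is trilinear in the inputs with exactly one factor equal to the difference $u-\widetilde u$. Applying \eqref{multilinH1Hs} and \eqref{multilinH12Hs} termwise at regularity $s$ then produces the symmetric bound $(\|u\|_{X^{s,b'}}^2+\|\widetilde u\|_{X^{s,b'}}^2)\|u-\widetilde u\|_{X^{s,b'}}$. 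The final sentence about replacing $z_1\overline{z_2}z_3$ by an arbitrary $\R$-trilinear form on $\C$ is immediate, since the proof only ever uses the pointwise bound $|z_1\overline{z_2}z_3|\le|z_1||z_2||z_3|$ and never the specific placement of complex conjugates, so the same H\"older and Strichartz steps apply verbatim.

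The main obstacle I expect is establishing the sub-critical Strichartz embedding with $b'<1/2$ on the mixed geometry $\R^2\times\T$: the Euclidean factor supplies genuine dispersion while the $\T$ factor does not, so one cannot simply quote a scale-invariant Strichartz estimate but must instead combine the $\R^2$ dispersive decay with a Galilean/frequency-localization argument on the torus variable (or cite the waveguide Strichartz theory referenced as \cite{fan2024long,ZZ} in the Appendix). Getting the exponents to close strictly below $b'=1/2$---which is what makes the contraction mapping in Section~\ref{sec: 4} work with a genuine time-gain $T^{1-b-b'}$---is the delicate point; everything else is routine H\"older and Littlewood--Paley bookkeeping.
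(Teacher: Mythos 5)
The paper gives no proof of this lemma: the Appendix states explicitly that the results are well known and omits the proofs, deferring to Bourgain, to Laurent's work, and to the waveguide Strichartz references. Your proposal therefore supplies an argument where the paper supplies only a citation, and the route you sketch is the standard one from exactly those sources: duality, Littlewood--Paley decomposition with the output weight $\langle\xi\rangle^{r}$ absorbed by the highest input frequency, bilinear Strichartz-type estimates on $\R^{2}\times\T$ via transference, interpolation against a crude bound to push the modulation exponent strictly below $1/2$, with $s>s_{0}$ providing the room to sum the dyadic blocks; the reduction of \eqref{multilinH1} to \eqref{multilinH1Hs}--\eqref{multilinH12Hs} by telescoping $|u|^{2}u-|\widetilde u|^{2}\widetilde u$ is likewise standard. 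Two points would need tightening in a written-out version. First, the H\"older splitting you propose, $L^{4}_{t,z}\cdot L^{4}_{t,z}\cdot L^{\infty}_{t}H^{s}\cdot L^{4}_{t,z}$, has too many factors for the quadrilinear form and does not pair to $L^{1}_{t,z}$; the clean version groups the dualized integrand into two bilinear blocks estimated in $L^{2}_{t,z}$ (high against low, twice), which is precisely where the bilinear refinement and the frequency gain enter. Second, justifying the ``any $\R$-trilinear form'' remark by the pointwise bound $|z_{1}\overline{z_{2}}z_{3}|\le|z_{1}||z_{2}||z_{3}|$ is loose: in $X^{s,b}$ arguments one cannot take absolute values before passing to the Fourier side, since conjugating a factor flips its characteristic paraboloid from $\tau=-|\xi|^{2}$ to $\tau=|\xi|^{2}$. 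The correct reason is that after duality and H\"older every factor is measured in a conjugation-invariant $L^{p}_{t,z}$ norm and the linear embeddings hold for both $e^{\ii t\Delta}$ and $e^{-\ii t\Delta}$, so all eight conjugation patterns are handled identically. Neither issue invalidates the strategy, which is the correct and expected one.
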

\begin{lemma}
\label{lmtrilinXsbbis}
For every $-1\leq s\leq 1$ and any $s_0<r\leq 1$, there exist $0<b'<1/2$ and $C>0$ such that for any $u\in X^{s,b'}$ and $a_1,a_2 \in X^{1,b'}$
\bnan
\label{inegtrilinmult}
\left\|a_1\overline{a_2}u\right\|_{X^{s,-b'}}\leq C\left\|a_1\right\|_{X^{1,b'}} \left\|a_2\right\|_{X^{1,b'}}\left\|u\right\|_{X^{s,b'}}\\
\label{inegtrilinmultcompact}
\left\||a_1|^2 u\right\|_{X^{s,-b'}}\leq C\left\|a_1\right\|_{X^{1,b'}} \left\|a_1\right\|_{X^{r,b'}}\left\|u\right\|_{X^{s,b'}}.
\enan
Moreover, the same estimates hold with $z_1\overline{z_2}z_3$ replaced by any $\R$-trilinear form on $\C$.
\end{lemma}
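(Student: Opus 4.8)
The plan is to prove \eqref{inegtrilinmult} and \eqref{inegtrilinmultcompact} together, since they differ only in how the regularities of the coefficient factors are distributed, and to dispatch the general $\R$-trilinear case at the very end. First I would pass to a dual formulation. Because $X^{s,-b'}$ is the dual of $X^{-s,b'}$ under the space--time $L^2$ pairing, the estimate \eqref{inegtrilinmult} is equivalent to the quadrilinear bound
\begin{equation*}
\left| \int_{\R}\int_{\R^2\times\T} a_1\,\overline{a_2}\,u\,\overline{w}\,dz\,dt \right| \lesssim \norm{a_1}_{X^{1,b'}}\norm{a_2}_{X^{1,b'}}\norm{u}_{X^{s,b'}}\norm{w}_{X^{-s,b'}}
\end{equation*}
valid for all $w\in X^{-s,b'}$. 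Since $\int a_1\overline{a_2}u\overline{w}=\overline{\int \overline{a_1}a_2\overline{u}w}$, the right-hand side is invariant under exchanging the roles of $(u,s)$ and $(w,-s)$ (together with swapping $a_1,a_2$), so it is enough to treat $0\le s\le 1$.

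The analytic input is a Strichartz estimate for $e^{\ii t\Delta_{x,y}}$ on $\R^2\times\T$, transferred to the Bourgain spaces. Using the transference principle with the linear $L^4_{t,z}$ estimate and then interpolating against the trivial identification $X^{0,0}=L^2_{t,z}$, I would fix an exponent $b'<1/2$ for which
\begin{equation*}
\norm{P_N f}_{L^4_{t,z}}\lesssim N^{s_0}\norm{P_N f}_{X^{0,b'}},
\end{equation*}
where $P_N$ is a Littlewood--Paley projection and $s_0$ is the threshold of the statement; the interpolation step is precisely what breaks the $1/2$ modulation threshold, and this is what eventually yields the gain $T^{1-b-b'}$ in the restriction spaces. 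In addition I would record the bilinear refinement: for dyadically separated frequencies $M\ll N$,
\begin{equation*}
\norm{P_M f\,\cdot\,P_N g}_{L^2_{t,z}}\lesssim \left(\tfrac{M}{N}\right)^{\gamma} N^{s_0}\norm{P_M f}_{X^{0,b'}}\norm{P_N g}_{X^{0,b'}}
\end{equation*}
for some $\gamma>0$, coming from the transversal dispersion in the $\R^2$ directions.

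With these two inputs I would decompose all four factors into Littlewood--Paley pieces and analyze the sum by frequency configuration, exploiting the constraint $\xi_1-\xi_2+\xi_3-\xi_4=0$, which forces the two largest frequencies to be comparable. When all four scales are comparable, four applications of the $L^4_{t,z}$ estimate close the bound provided $s_0<1/2$: the Sobolev weights $N^{1},N^{1},N^{s},N^{-s}$ combine to the summable power $N^{4s_0-2}$ of the common scale. When two frequencies dominate, I would group $\int a_1\overline{a_2}u\overline{w}=\int(a_1u)\overline{(a_2w)}$, apply Cauchy--Schwarz in $L^2_{t,z}$, and use the bilinear estimate on each separated low--high pair; the gains $(\text{low}/\text{high})^{\gamma}$ then dominate the $N^{s_0}$ losses, the sums over the low scales converge by Cauchy--Schwarz (using $\gamma<1$), and the $X^{s}$ and $X^{-s}$ weights on $u$ and $w$ cancel against the conversion factors so that the remaining dyadic sum in the common high scale produces exactly $\norm{u}_{X^{s,b'}}\norm{w}_{X^{-s,b'}}$. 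Estimate \eqref{inegtrilinmultcompact} is handled identically, the only difference being that one $a_1$ factor is measured in $X^{r,b'}$ instead of $X^{1,b'}$; the hypothesis $r>s_0$ is exactly what keeps this rougher factor above the Strichartz threshold so that the same case analysis applies. Finally, the passage to an arbitrary $\R$-trilinear form is immediate, since every such form is a real-linear combination of the monomials $z_1z_2z_3$, $z_1\overline{z_2}z_3$, and so on, and neither the $L^4$ nor the bilinear estimate is sensitive to complex conjugation (which only reflects a frequency support $\xi\mapsto-\xi$).

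The step I expect to be the main obstacle is the separated-frequency regime, and within it the endpoints $s=0$ and $s=1$. There the Sobolev weights leave no room, so the estimate rests entirely on the bilinear gain $\gamma$ dominating the Strichartz loss $s_0$; establishing such a bilinear estimate with a usable $\gamma$ on the waveguide $\R^2\times\T$, rather than on $\R^3$ or $\T^3$, and doing so at a modulation exponent $b'<1/2$, is the technical heart of the argument. Everything else reduces to Cauchy--Schwarz bookkeeping of the dyadic sums.
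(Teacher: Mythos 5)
The paper does not actually prove this lemma: the appendix states these properties without proof and refers to Tao's book, to Laurent's work \cite{laurent2010global}, and to \cite{fan2024long,ZZ} for the waveguide-adapted Bourgain spaces. Your outline follows exactly the route taken in those references --- dualize $X^{s,-b'}=(X^{-s,b'})^*$, reduce to a quadrilinear form, exploit the conjugation symmetry to restrict to $0\le s\le 1$, decompose dyadically, and close the frequency-interaction analysis with an $L^4_{t,z}$ Strichartz estimate and a bilinear refinement transferred to $X^{0,b'}$ with $b'<1/2$ --- so structurally it is the right proof and the dyadic bookkeeping you describe does close under the stated numerology.

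The genuine gap is the one you flag yourself but then pass over: the two analytic inputs are asserted, not established, and on $\R^2\times\T$ they \emph{are} the lemma. The linear estimate $\|P_Nf\|_{L^4_{t,z}}\lesssim N^{s_0}\|P_Nf\|_{X^{0,b'}}$ with $b'<1/2$ does not follow from interpolating the $b>1/2$ version against $X^{0,0}=L^2$ alone (that interpolation degrades the Lebesgue exponent as well); the standard mechanism, as in Laurent, is to first prove the full trilinear estimate at some $b>1/2$ and regularity strictly above $s_0$, and only then interpolate the \emph{trilinear} estimate against a crude $b=0$ bound, using the slack $r>s_0$ to absorb the loss --- your sketch conflates these two steps. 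Likewise the bilinear refinement with gain $(M/N)^{\gamma}$ is usually stated with the loss on the \emph{minimum} frequency, $\|P_MfP_Ng\|_{L^2}\lesssim M^{s_0}\|P_Mf\|_{X^{0,b}}\|P_Ng\|_{X^{0,b}}$, and proving it on the mixed geometry $\R^2\times\T$ (where one must combine the Euclidean $L^4$ theory in $x$ with Bourgain's counting argument in $y$) is precisely the waveguide-specific content that the paper outsources to \cite{fan2024long,ZZ}. A complete proof must either cite these inputs precisely or prove them; as written, your argument reduces the lemma to two unproven theorems. A last minor caution: complex conjugation flips the modulation weight $\langle\tau-|\xi|^2\rangle$ to $\langle\tau+|\xi|^2\rangle$, so the reduction to an arbitrary $\R$-trilinear form requires working throughout with $|\widetilde{u}|$ on the Fourier side, not merely with a frequency reflection.
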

Let us study the stability of the $X^{s,b}$ spaces with respect to some particular operations.
\begin{lemma}
\label{lemmatps}
Let $\varphi \in C^{\infty}_0(\R)$ and $u\in X^{s,b}$ then $\varphi(t) u \in X^{s,b}$.\\
If $u\in X^{s,b}_T$ then we have $\varphi(t) u \in X^{s,b}_T$.
\end{lemma}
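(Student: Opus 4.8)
The plan is to peel off the two weights in the $X^{s,b}$ norm one at a time, reducing the statement to the classical fact that multiplication by a $C^\infty_0$ function is bounded on $H^b(\R)$ in the time variable. First I would dispose of the spatial weight. Writing $\langle D_z\rangle^s$ for the Fourier multiplier with symbol $\langle\xi\rangle^s$ in the space variable $z\in\R^m\times\T^n$, one has $\|u\|_{X^{s,b}}=\|\langle D_z\rangle^s u\|_{X^{0,b}}$ by definition, and since $\varphi$ depends only on $t$ while $\langle D_z\rangle^s$ acts only on $z$, these two operators commute: $\langle D_z\rangle^s(\varphi u)=\varphi\,\langle D_z\rangle^s u$. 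Hence $\|\varphi u\|_{X^{s,b}}=\|\varphi\,\langle D_z\rangle^s u\|_{X^{0,b}}$, and it suffices to establish the estimate $\|\varphi w\|_{X^{0,b}}\le C\|w\|_{X^{0,b}}$ and then apply it to $w=\langle D_z\rangle^s u$.

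Next I would remove the modulation weight using the standard identity $\|w\|_{X^{0,b}}=\|e^{\ii t\Delta}w\|_{H^b(\R_t;L^2_z)}$, which follows from the substitution $\tau\mapsto\tau+|\xi|^2$ on the spacetime Fourier side. Because $e^{\ii t\Delta}$ acts on $z$ alone at each fixed time, it again commutes with the scalar multiplier $\varphi(t)$, so $e^{\ii t\Delta}(\varphi w)=\varphi\,e^{\ii t\Delta}w$. Therefore everything reduces to proving that multiplication by $\varphi\in C^\infty_0(\R)$ is bounded on $H^b$ in the time variable with values in the Hilbert space $L^2_z$.

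For this last step I would argue on the time-Fourier side, where multiplication by $\varphi$ becomes convolution with $\hat\varphi$, i.e. $\widehat{\varphi w}=\hat\varphi\ast\hat w$. Applying Peetre's inequality $\langle\tau\rangle^b\lesssim\langle\tau-\sigma\rangle^{|b|}\langle\sigma\rangle^b$ gives the pointwise (in $L^2_z$) bound $\langle\tau\rangle^b|\widehat{\varphi w}(\tau)|\lesssim\bigl(\langle\cdot\rangle^{|b|}|\hat\varphi|\bigr)\ast\bigl(\langle\cdot\rangle^b|\hat w|\bigr)(\tau)$, and since $\varphi\in C^\infty_0$ has Schwartz Fourier transform, $\langle\cdot\rangle^{|b|}\hat\varphi\in L^1(\R)$. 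Young's inequality then yields $\|\varphi w\|_{H^b}\lesssim\|\langle\cdot\rangle^{|b|}\hat\varphi\|_{L^1}\|w\|_{H^b}=C_\varphi\|w\|_{H^b}$, which completes the proof of the first assertion. The restriction-space claim follows at once: given $u\in X^{s,b}_T$, choose an extension $\tilde u\in X^{s,b}$ with $\tilde u=u$ on $(0,T)\times M$; then $\varphi\tilde u\in X^{s,b}$ by the above and $\varphi\tilde u=\varphi u$ on $(0,T)\times M$, so $\varphi u\in X^{s,b}_T$ with $\|\varphi u\|_{X^{s,b}_T}\le C\|u\|_{X^{s,b}_T}$ after taking the infimum over extensions.

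The only place requiring any care is the final estimate, and specifically its uniformity over all real $b$, including negative and fractional exponents where neither the Leibniz rule nor a naive product estimate applies directly. The Peetre-plus-Young argument handles every real $b$ in one stroke, so I expect no genuine obstacle; the two commutation identities are the conceptual heart of the argument.
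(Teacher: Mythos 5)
Your proposal is correct and is precisely the standard argument: the paper itself omits the proof of this appendix lemma as ``well-known'', and the classical proof (as in the cited references of Laurent and Tao) is exactly your reduction --- commute $\varphi(t)$ past $\langle D_z\rangle^s$ and past the free propagator to rewrite $\|u\|_{X^{0,b}}=\|e^{\ii t\Delta}u\|_{H^b_t L^2_z}$, then bound multiplication by $\varphi$ on $H^b_t$ via Peetre's inequality and Young's convolution inequality, the restriction-space statement following by applying this to an arbitrary extension. I see no gap, and your handling of all real $b$ (including negative and fractional) through the $\langle\tau-\sigma\rangle^{|b|}$ weight is the right way to make the estimate uniform.
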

In the case of pseudo-differential operators in the space variable, we have to deal with a loss in $X^{s,b}$ regularity compared to what we could expect. Some regularity in the index $b$ is lost, due to the fact that a pseudo-differential operator does not keep the structure in time of the harmonics.

This loss is unavoidable as we can see, for simplicity on the torus $\mathbb{T}^1$ : we take  $u_n=\psi(t)e^{inx}e^{i|n^2|t}$ (where $\psi \in C^{\infty}_0$ equal to $1$ on $[-1,1]$) which is uniformly bounded in $X^{0,b}$ for every $b\geq 0$. Yet, if we consider the operator $B$ of order $0$ of multiplication by $e^{ix}$, we get $\left\|e^{ix}u_n\right\|_{X^{0,b}} \approx n^b$. Yet, we do not have such loss for operator of the form $(-\Delta)^r$ which acts from any $X^{s,b}$ to $X^{s-2r,b}$. But if we do not make any further assumption on the pseudo-differential operator, we can show that our example is the worst one : 
\begin{lemma}
\label{lemmapseudoxsb}
Let $ -1 \leq b \leq 1$ and $B$ be a pseudo-differential operator in the space variable of order $\rho$. For any $u\in X^{s,b}$ we have $B u \in X^{s-\rho-|b|,b}$.\\
Similarly, $B$ maps $X^{s,b}t$ into $X^{s-\rho-|b|,b}_T$.
\end{lemma}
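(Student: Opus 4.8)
The plan is to reduce everything to the standard Sobolev characterization of the Bourgain norm and to isolate the time-derivative loss through a single commutator. Writing $v(t):=e^{it\Delta}u(t)$, one has the isometric identity $\norm{u}_{X^{s,b}}=\norm{v}_{H^b(\R_t;H^s)}$, which follows by the change of variables $\tau\mapsto\tau-|\xi|^2$ in the space–time Fourier transform (the transference principle recalled above). Since $B$ acts only in the space variable, $e^{it\Delta}Bu=B(t)v$ with $B(t):=e^{it\Delta}Be^{-it\Delta}$. Because $e^{it\Delta}$ is unitary on every $H^\sigma(\R^m\times\T^n)$, the operator $B(t)$ is bounded $H^\sigma\to H^{\sigma-\rho}$ uniformly in $t$, and the same holds for any fixed space operator conjugated by the group. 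For $b=0$ this already gives the claim: $X^{s,0}=L^2_tH^s_x$, and fiberwise in $t$ the operator $B$ maps $H^s$ to $H^{s-\rho}$, so $B:X^{s,0}\to X^{s-\rho,0}$ with no loss, consistent with $|b|=0$.

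The heart of the matter is the endpoint $b=1$, where I would prove $B:X^{s,1}\to X^{s-\rho-1,1}$. By the characterization it suffices to bound $\norm{B(t)v}_{H^1(\R_t;H^{s-\rho-1})}$ by $\norm{v}_{H^1(\R_t;H^s)}$. The $L^2_t$ part is immediate since $B(t)$ is order $\rho$ and $s-\rho-1\le s-\rho$. For the time derivative I compute
\begin{equation*}
\partial_t\big(B(t)v\big)=\big(\partial_tB(t)\big)v+B(t)\partial_tv,\qquad \partial_tB(t)=\ii\,e^{it\Delta}[\Delta,B]e^{-it\Delta}.
\end{equation*}
The term $B(t)\partial_tv$ is controlled in $L^2_tH^{s-\rho}$ by $\norm{\partial_tv}_{L^2_tH^s}\le\norm{v}_{H^1_tH^s}$. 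The decisive point is that the commutator $[\Delta,B]$ of the order-$2$ operator $\Delta$ with the order-$\rho$ operator $B$ has order $\rho+1$ (not $\rho+2$), so $\partial_tB(t)$ is order $\rho+1$ uniformly in $t$; hence $(\partial_tB(t))v$ is controlled in $L^2_tH^{s-\rho-1}$ by $\norm{v}_{L^2_tH^{s}}$. Summing the two contributions yields a loss of exactly one derivative, matching $|b|=1$; the example recorded before the statement shows this single-derivative loss is sharp.

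With the two endpoints in hand, the range $0\le b\le 1$ follows by complex interpolation (property \eqref{enuminterp}), the target regularity $s-\rho-b$ being the linear interpolant between $s-\rho$ and $s-\rho-1$, i.e.\ exactly $s-\rho-|b|$. The range $-1\le b\le 0$ is then obtained by duality: $X^{s,b}$ pairs with $X^{-s,-b}$, the formal adjoint $B^*$ is again a space pseudodifferential operator of order $\rho$, and applying the already-proven case $-b\in[0,1]$ to $B^*$ gives $\norm{Bu}_{X^{s-\rho-|b|,b}}=\sup|\langle u,B^*\phi\rangle|\lesssim\norm{u}_{X^{s,b}}$, the supremum taken over $\phi$ with $\norm{\phi}_{X^{-(s-\rho-|b|),-b}}\le1$. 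Finally, the restriction-space statement follows at once: since $B$ commutes with restriction in time, applying the global bound to any extension $\tilde u$ of $u$ and passing to the infimum gives $\norm{Bu}_{X^{s-\rho-|b|,b}_T}\lesssim\norm{u}_{X^{s,b}_T}$. The main obstacle is the endpoint $b=1$, and precisely the verification within the mixed $\R^m\times\T^n$ pseudodifferential calculus that $[\Delta,B]$ drops exactly one order, so that the loss is $|b|$ rather than $2|b|$; everything else is soft (unitarity, interpolation, duality).
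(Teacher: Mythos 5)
Your argument is correct, and it is worth noting that the paper itself gives no proof of this lemma: the appendix states the $X^{s,b}$ facts without proof, referring to Tao's book and to Laurent's paper, where the standard argument is exactly the one you reconstruct (endpoints $b=0,1$, commutator $[\Delta,B]$ of order $\rho+1$, complex interpolation via property (4) of the appendix, duality for negative $b$). Your packaging through the conjugated operator $B(t)=e^{\ii t\Delta}Be^{-\ii t\Delta}$ and the identity $\norm{u}_{X^{s,b}}=\norm{v}_{H^b(\R_t;H^s)}$ is equivalent to the more common formulation $\norm{u}_{X^{s,1}}^2\approx\norm{u}_{L^2_tH^s}^2+\norm{(\ii\p_t+\Delta)u}_{L^2_tH^s}^2$ followed by $(\ii\p_t+\Delta)(Bu)=B(\ii\p_t+\Delta)u+[\Delta,B]u$; both isolate the same one-order loss from the same commutator. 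Two small points to tidy up. First, with the paper's convention $\mathcal{F}(e^{\ii t\Delta}f)(\xi)=e^{-\ii t|\xi|^2}\mathcal{F}f(\xi)$ and the weight $\langle\tau-|\xi|^2\rangle$, the isometry reads $\norm{u}_{X^{s,b}}=\norm{e^{-\ii t\Delta}u}_{H^b_tH^s_x}$, so you should set $v=e^{-\ii t\Delta}u$ and $B(t)=e^{-\ii t\Delta}Be^{\ii t\Delta}$; this changes nothing in the estimates. Second, the uniform-in-$t$ bounds on $B(t)$ and on $\p_tB(t)=-\ii e^{-\ii t\Delta}[\Delta,B]e^{\ii t\Delta}$ use only that $e^{\ii t\Delta}$ is an isometry on every $H^\sigma(\R^m\times\T^n)$ together with the symbolic calculus on $\R^m\times\T^n$ giving $[\Delta,B]$ order $\rho+1$; since the operators $B$ actually used in the paper are of the form $(1-\Delta_{x,y})^{\sigma}$ composed with multiplication by smooth compactly supported (in $x$) functions, this calculus is available and your proof is complete as stated.
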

We will also use the following elementary estimate.
\begin{lemma}
\label{gainint}
Let $(b,b')$ satisfying
\begin{eqnarray}
0<b'<\frac{1}{2}<b,~~~~b+b'\leq 1. 
\end{eqnarray}
If we note $F(t)=\Psi\left(\frac{t}{T}\right)\int_0^t f(t')dt'$, we have for $T\leq 1$
\bna
\left\|F\right\|_{H^b} \leq CT^{1-b-b'}\left\|f\right\|_{H^{-b'}}.
\ena
\end{lemma}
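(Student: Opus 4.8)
The plan is to reduce the estimate to the model case $T=1$ by a scaling argument that captures the entire gain $T^{1-b-b'}$, and then to prove the fixed-cutoff bound
\[
\Bigl\|\Psi(\cdot)\int_0^{\cdot}f(t')\,dt'\Bigr\|_{H^b}\lesssim\|f\|_{H^{-b'}}
\]
by a low/high frequency decomposition in the dual time variable $\tau$. For the scaling step I would set $\tilde f(s)=f(Ts)$ and $\tilde F(s)=F(Ts)=T\,\Psi(s)\int_0^s\tilde f(s')\,ds'$; applying the $T=1$ estimate gives $\|\tilde F\|_{H^b}\lesssim T\|\tilde f\|_{H^{-b'}}$, and the homogeneous dilation identities $\|F\|_{\dot H^b}=T^{1/2-b}\|\tilde F\|_{\dot H^b}$ and $\|\tilde f\|_{\dot H^{-b'}}=T^{-1/2-b'}\|f\|_{\dot H^{-b'}}$ multiply out to exactly $T^{1-b-b'}$. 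Since $T\le 1$ and $F$ is supported in a time interval of length $O(T)$, the passage between homogeneous and inhomogeneous norms costs only harmless constants, a routine point I would verify at the end.

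For the fixed-cutoff estimate I would split $f=f_{\mathrm{lo}}+f_{\mathrm{hi}}$ with $\widehat{f_{\mathrm{lo}}}$ supported in $|\tau|\le 1$ and $\widehat{f_{\mathrm{hi}}}$ in $|\tau|\ge 1$. On the high part, $\int_0^t f_{\mathrm{hi}}=h(t)-h(0)$ where $\widehat h(\tau)=\widehat{f_{\mathrm{hi}}}(\tau)/(i\tau)$; division by $\tau$ gains one derivative, and since $|\tau|\ge 1$ we have $\|\langle\tau\rangle^{b}|\tau|^{-1}\widehat{f_{\mathrm{hi}}}\|_{L^2}\lesssim\|\langle\tau\rangle^{b-1}\widehat{f_{\mathrm{hi}}}\|_{L^2}\le\|f_{\mathrm{hi}}\|_{H^{-b'}}$, where the last inequality uses precisely $b-1\le -b'$, i.e.\ $b+b'\le 1$. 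Note that $b>1/2$ also enters here, through the embedding $H^b\hookrightarrow C^0$, which makes the pointwise value $h(0)$ meaningful. Multiplication by the smooth, compactly supported cutoff $\Psi$ is bounded on $H^b$ because $|b|\le 1$, so the term $\Psi\,h$ is controlled.

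The \textbf{main obstacle} is the boundary constant $h(0)$, reflecting the fact that $\int_0^t$ is not a clean Fourier multiplier because of the singularity of $1/(i\tau)$ at $\tau=0$. Here I would estimate $|h(0)|=\bigl|\int\widehat{f_{\mathrm{hi}}}(\tau)/(i\tau)\,d\tau\bigr|\le\|f_{\mathrm{hi}}\|_{H^{-b'}}\bigl(\int_{|\tau|\ge1}\langle\tau\rangle^{2b'}\tau^{-2}\,d\tau\bigr)^{1/2}$, and the last integral is finite exactly when $2b'-2<-1$, i.e.\ $b'<1/2$; this is where that hypothesis is used. Then $\|\Psi\,h(0)\|_{H^b}=|h(0)|\,\|\Psi\|_{H^b}\lesssim\|f_{\mathrm{hi}}\|_{H^{-b'}}$.

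For the low part, $f_{\mathrm{lo}}$ is band-limited, so $g_{\mathrm{lo}}(t)=\int_0^t f_{\mathrm{lo}}$ obeys $|g_{\mathrm{lo}}(t)|\lesssim|t|^{1/2}\|f_{\mathrm{lo}}\|_{L^2}$ by Cauchy--Schwarz and $g_{\mathrm{lo}}'=f_{\mathrm{lo}}$. Working on the bounded support of $\Psi$, an elementary computation of $\|\Psi g_{\mathrm{lo}}\|_{H^1}$ through $(\Psi g_{\mathrm{lo}})'=\Psi' g_{\mathrm{lo}}+\Psi f_{\mathrm{lo}}$ yields $\|\Psi g_{\mathrm{lo}}\|_{H^1}\lesssim\|f_{\mathrm{lo}}\|_{L^2}\sim\|f_{\mathrm{lo}}\|_{H^{-b'}}$ (on $|\tau|\le 1$ the weight $\langle\tau\rangle^{-b'}$ is comparable to $1$), and the embedding $H^1\hookrightarrow H^b$, valid since $b\le 1$, finishes this piece. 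Collecting the two contributions and undoing the scaling gives the claim; the only genuinely delicate point is the $\tau=0$ singularity of the antiderivative, which is exactly what pins down the roles of the hypotheses $b'<1/2$ and $b+b'\le 1$.
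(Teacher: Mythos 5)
The paper itself does not prove this lemma: the appendix states it without proof as a known estimate (it goes back to Bourgain and Ginibre and appears with proof in the appendix of \cite{laurent2010global}). The classical argument writes $\int_0^t f = \int \hat f(\tau)\frac{e^{it\tau}-1}{i\tau}\,d\tau$, splits at $|\tau|\le 1/T$ versus $|\tau|\ge 1/T$, and Taylor-expands $e^{it\tau}-1$ on the low-frequency piece to extract the power of $T$ term by term. Your variant --- rescale to $T=1$ first, then decompose at $|\tau|=1$ --- is a legitimate alternative: it trades the Taylor expansion for the crude bound $|\int_0^t f_{\mathrm{lo}}|\le |t|^{1/2}\|f_{\mathrm{lo}}\|_{L^2}$ plus the $H^1\hookrightarrow H^b$ embedding, and all of your fixed-cutoff estimates check out, including the correct identification of where $b'<1/2$ (convergence of $\int_{|\tau|\ge1}\langle\tau\rangle^{2b'}\tau^{-2}d\tau$, i.e.\ the $\tau=0$ singularity of the antiderivative) and $b+b'\le1$ (the gain of one derivative on the high part) enter.

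The one step that does not survive as written is the scaling reduction. You propose to run the dilation through \emph{homogeneous} norms and then pass back to inhomogeneous ones at "harmless constant" cost, justified by the compact support of $F$. That justification says nothing about the source side, and there the passage fails: for negative order one only has $\|f\|_{H^{-b'}}\le\|f\|_{\dot H^{-b'}}$, and the reverse inequality is false (take $\hat f=\mathbf{1}_{|\tau|\le\epsilon}$; the ratio of the two norms blows up like $\epsilon^{-b'}$ as $\epsilon\to0$). So the chain ending in $\|f\|_{\dot H^{-b'}}$ cannot be closed by $\|f\|_{H^{-b'}}$. The repair is genuinely one line, but it is a different line: scale the \emph{inhomogeneous} norms directly. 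Since $T\le1$ one has $T\langle\tau\rangle\le\langle T\tau\rangle$, hence with $\tilde f(s)=f(Ts)$, $\tilde F(s)=F(Ts)$ one gets $\|\tilde f\|_{H^{-b'}}\le T^{-1/2-b'}\|f\|_{H^{-b'}}$ and $\|F\|_{H^{b}}\le T^{1/2-b}\|\tilde F\|_{H^{b}}$ with constants exactly $1$, and the two exponents combine with the extra factor $T$ in $\tilde F=T\Psi\int_0^{\cdot}\tilde f$ to give $T^{1-b-b'}$. With that substitution your proof is complete.
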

In the future aim of using a boot-strap argument, we will need some continuity in $T$ of the $X^{s,b}_T$ norm of a fixed function : 
\begin{lemma}
\label{continuiteXsbt}
Let $0<b<1$ and $u$ in $X^{s,b}$ then the function
\begin{eqnarray*}
\left\lbrace
\begin{array}{rcrcl}
f&:&(0,T]& \longrightarrow &  \R \\
 & &t    & \longmapsto     & \left\|u\right\|_{X^{s,b}_{t}}
\end{array}
\right.
\end{eqnarray*} 
 is continuous.
Moreover, if $b>1/2$, there exists $C_b$ such that 
$$\lim_{t\rightarrow 0}f(t) \leq C_b \|u(0)\|_{H^s}.$$
\end{lemma}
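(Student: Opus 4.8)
The plan is to first transfer the problem to a fractional Sobolev space in the time variable. Conjugating by the free propagator, the map $u\mapsto w:=e^{-it\Delta}u$ is an isometry from $X^{s,b}$ onto $H^b(\R;H^s)$ that commutes with restriction in time (since $e^{-it\Delta}$ acts pointwise in $t$), so that $f(t)=\|w\|_{H^b((0,t);H^s)}$, the right-hand side being the infimum-over-extensions norm. It therefore suffices to prove the statement for this vector-valued fractional norm. Since enlarging the interval only enlarges the set of constraints on admissible extensions, $f$ is non-decreasing; hence the one-sided limits exist with $f(t^-)\le f(t)\le f(t^+)$, and continuity amounts to the two reverse inequalities. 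Left-continuity follows from a weak-compactness argument: for $t_n\uparrow t$ choose near-optimal extensions $\tilde w_n$ with $\tilde w_n=w$ on $(0,t_n)$ and $\|\tilde w_n\|_{H^b}\le f(t_n)+1/n\le f(t)+1$; this sequence is bounded, so along a subsequence $\tilde w_n\rightharpoonup W$ weakly in $H^b(\R;H^s)$. For each $\delta>0$ we have $\tilde w_n=w$ on $(0,t-\delta)$ for $n$ large, whence $W=w$ on $(0,t)$; then $W$ is an admissible extension at $t$ and, by weak lower semicontinuity, $f(t)\le\|W\|_{H^b}\le\liminf_n\|\tilde w_n\|_{H^b}\le f(t^-)$, giving $f(t^-)=f(t)$.

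Right-continuity is the delicate direction. Given $\epsilon>0$, pick a near-optimal extension $\tilde w$ with $\tilde w=w$ on $(0,t)$ and $\|\tilde w\|_{H^b}\le f(t)+\epsilon$. For $t'>t$ I would correct $\tilde w$ on the short interval $(t,t')$ by setting $W_{t'}:=\tilde w+\phi_{t'}\,(w-\tilde w)$, where $\phi_{t'}$ is a smooth time cutoff equal to $1$ on $(-\infty,t']$ and supported in $(-\infty,t'+\delta)$; since $w-\tilde w$ vanishes on $(0,t)$, the corrector is supported in $[t,t'+\delta]$ and $W_{t'}=w$ on $(0,t')$, so $f(t')\le\|W_{t'}\|_{H^b}\le f(t)+\epsilon+\|\phi_{t'}(w-\tilde w)\|_{H^b}$. \textbf{This last term is the main obstacle}: for $b>1/2$ multiplication by cutoffs is not bounded on $H^b$ with constants uniform under concentration, so one cannot use a sharp characteristic function. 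I would instead exploit that, by the embedding $H^b\hookrightarrow C^{0,b-1/2}$, the difference $w-\tilde w$ is continuous and vanishes at $t$, and estimate the Gagliardo seminorm of $\phi_{t'}(w-\tilde w)$ directly to obtain $\|\phi_{t'}(w-\tilde w)\|_{H^b}\to0$ as $t'\downarrow t$. Letting $t'\downarrow t$ and then $\epsilon\to0$ yields $f(t^+)\le f(t)$; the regime $b\le1/2$ is easier since cutoffs act boundedly on $H^b$ there and the corrector's $H^b$ mass on the shrinking interval tends to $0$ by absolute continuity.

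For the limit at $t=0$ when $b>1/2$, I would use the splitting $u=\psi(t)\,e^{it\Delta}u(0)+g$, where $\psi\in C_0^\infty(\R)$ with $\psi(0)=1$ and $\psi\equiv1$ on $[0,1]$. The first term is controlled by the free-evolution estimate recalled in the proof of Theorem \ref{thm: well-posedness of NLS-source} together with Lemma \ref{lemmatps}, giving $\|\psi e^{it\Delta}u(0)\|_{X^{s,b}_t}\le\|\psi e^{it\Delta}u(0)\|_{X^{s,b}}\le C_b\|u(0)\|_{H^s}$. The remainder satisfies $g(0)=0$, so its conjugate $\tilde g=e^{-it\Delta}g\in H^b(\R;H^s)$ vanishes at the origin; using once more $H^b\hookrightarrow C^{0,b-1/2}$ to get $\|\tilde g(\sigma)\|_{H^s}\lesssim\sigma^{\,b-1/2}$ near $0$, a direct estimate of the restriction norm of $\tilde g$ on $(0,t)$ (for instance by testing against the extension $\tilde g\,\eta(\cdot/t)$) shows $\|g\|_{X^{s,b}_t}\to0$ as $t\to0$. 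Combining the two bounds gives $\limsup_{t\to0}f(t)\le C_b\|u(0)\|_{H^s}$, and since $f$ is monotone the limit exists and satisfies the claimed inequality.
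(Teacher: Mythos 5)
The paper does not actually prove this lemma: it appears in the appendix among properties of $X^{s,b}$ spaces that are ``well-known'', with proofs deferred to \cite{Taobook} and \cite{laurent2010global}. So your proposal has to stand on its own. Its architecture is the standard one and most of it is sound: the conjugation $u\mapsto w=e^{-it\Delta}u$ is indeed an isometry of $X^{s,b}$ onto $H^b(\R;H^s)$ commuting with time restriction, monotonicity of $f$ is immediate from the infimum-over-extensions definition, and the left-continuity argument (near-optimal extensions, weak compactness in the Hilbert space $H^b(\R;H^s)$, weak closedness of the affine constraint set, weak lower semicontinuity of the norm) is complete and correct.

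Two points in the remaining steps need repair. First, a concrete error in the right-continuity step: with $\phi_{t'}\equiv 1$ on $(-\infty,t']$, the corrector $\phi_{t'}(w-\tilde w)$ is \emph{not} supported in $[t,t'+\delta]$. The extension $\tilde w$ is only required to agree with $w$ on $(0,t)$, so the corrector also contains the full discrepancy $(w-\tilde w)\mathbf{1}_{(-\infty,0]}$, which does not shrink as $t'\downarrow t$; as written, $\|\phi_{t'}(w-\tilde w)\|_{H^b}\not\to 0$ in general. The fix is easy: take $\phi_{t'}$ equal to $1$ on $[t,t']$ and supported in $[t/2,\,t'+(t'-t)]$, so that the vanishing of $w-\tilde w$ on $(0,t)$ genuinely confines the product to a neighborhood of $[t,t']$. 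Second, the ``direct Gagliardo estimate'' you invoke twice (for the corrector and for $\|g\|_{X^{s,b}_t}\to0$) is borderline: using only the H\"older bound $\|h(\sigma)\|_{H^s}\lesssim|\sigma-t|^{b-1/2}$ from $H^b\hookrightarrow C^{0,b-1/2}$, the commutator piece of the seminorm, namely $\iint \|h(\sigma')\|_{H^s}^2\,|\phi_{t'}(\sigma)-\phi_{t'}(\sigma')|^2\,|\sigma-\sigma'|^{-1-2b}\,d\sigma\,d\sigma'$, evaluates to $O(1)$ rather than $o(1)$ --- this is exactly the endpoint Hardy computation. To conclude you need an additional ingredient, for instance that for $1/2<b<3/2$ the functions vanishing near the point are dense in $\{h\in H^b:\ h(t)=0\}$ (equivalently, Hardy's inequality on that subspace), which yields uniform boundedness of the rescaled cutoffs there and then convergence to $0$ by approximation. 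With these two repairs the argument closes; they are precisely the technical content the lemma is hiding.
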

The following lemma will be useful to control solutions on large intervals that will be obtained by piecing together solutions on smaller ones. 
\begin{lemma}
\label{lemmarecouvrement}
Let $0<b<1$. If $\bigcup (a_k,b_{k})$ is a finite covering of $[0,1]$, then there exists a constant $C$ depending only of the covering such that for every $u\in X^{s,b}$,
\begin{eqnarray*}
\left\|u\right\|_{X^{s,b}_{[0,1]}}\leq C\sum_k \left\|u\right\|_{X^{s,b}_{[a_k,b_{k}]}}.
\end{eqnarray*}
\end{lemma}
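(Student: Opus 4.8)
The plan is to glue together near-optimal local extensions by means of a smooth temporal partition of unity subordinate to the covering, and then to invoke the stability of $X^{s,b}$ under multiplication by a time cutoff (Lemma \ref{lemmatps}) to control each piece.

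First I would fix the partition of unity. Since $\{(a_k,b_k)\}$ is a finite open covering of the compact set $[0,1]$, there exist functions $\chi_k\in C^\infty_0(\R)$ with $\operatorname{supp}\chi_k\subset(a_k,b_k)$ and $\sum_k\chi_k\equiv 1$ on a neighborhood of $[0,1]$. Next, for each $k$ I would use the very definition of the restriction norm to choose an extension $\tilde u_k\in X^{s,b}$ with $\tilde u_k=u$ on $(a_k,b_k)$ and $\|\tilde u_k\|_{X^{s,b}}\leq 2\|u\|_{X^{s,b}_{[a_k,b_k]}}$ (any factor strictly larger than the infimum is admissible).

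Then I would set $v:=\sum_k\chi_k\tilde u_k$. The key observation is that on $\operatorname{supp}\chi_k$ one has $\tilde u_k=u$, so $\chi_k\tilde u_k=\chi_k u$ there; consequently, on $[0,1]$, where $\sum_k\chi_k=1$, we obtain $v=\sum_k\chi_k u=u$. Thus $v$ is a global-in-time extension of $u$ restricted to $[0,1]$, and by definition of the restriction norm $\|u\|_{X^{s,b}_{[0,1]}}\leq\|v\|_{X^{s,b}}$. Applying the triangle inequality and then Lemma \ref{lemmatps} to each product $\chi_k\tilde u_k$ gives
$$\|u\|_{X^{s,b}_{[0,1]}}\leq\sum_k\|\chi_k\tilde u_k\|_{X^{s,b}}\leq\sum_k C_k\|\tilde u_k\|_{X^{s,b}}\leq 2\sum_k C_k\|u\|_{X^{s,b}_{[a_k,b_k]}},$$
which is the claimed estimate with $C=2\max_k C_k$, the constants $C_k$ depending only on $\chi_k$, hence only on the covering.

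The only nontrivial ingredient is the boundedness of multiplication by a smooth compactly supported function of $t$ on $X^{s,b}$, which is precisely Lemma \ref{lemmatps}; the hypothesis $0<b<1$ enters only to guarantee this bound with a finite constant. Everything else is bookkeeping, so I do not expect a genuine obstacle: the agreement $v=u$ on $[0,1]$ is immediate from the support properties of the partition of unity, and the loss is confined to the finitely many cutoff multiplications.
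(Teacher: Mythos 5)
Your argument is correct. The paper states this lemma without proof (deferring to the standard references), and your partition-of-unity-plus-near-optimal-extensions construction is precisely the standard argument; the only point worth making explicit is that Lemma~\ref{lemmatps} as stated asserts membership $\varphi(t)u\in X^{s,b}$ rather than the quantitative bound $\|\varphi(t)u\|_{X^{s,b}}\leq C_{\varphi}\|u\|_{X^{s,b}}$ that you use, so you should either quote that bound directly (it is what the proof of Lemma~\ref{lemmatps} actually yields) or note that it follows from the closed graph theorem.
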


\bibliographystyle{plain}
\bibliography{ref}
\end{document}